%
%
%


\documentclass{amsart}

\usepackage{tikz}
\usetikzlibrary{arrows.meta}
\usepackage{commutative-diagrams}
\usetikzlibrary{cd}
\usepackage{geometry}
\geometry{a4paper,scale=0.72}
\usepackage[colorlinks=true]{hyperref}
\usepackage[all]{hypcap}
\usepackage{mathtools}
\usepackage{booktabs}
\usepackage{tabularx}
\usepackage{array}
\usepackage{amssymb}
\usepackage{dsfont}
\usepackage{witharrows}
\usepackage{threeparttable}

\newtheorem{theorem}{Theorem}[section]
\newtheorem{lemma}[theorem]{Lemma}

\newtheorem{proposition}[theorem]{Proposition}

\theoremstyle{definition}

\newtheorem{example}[theorem]{Example}

\theoremstyle{remark}
\newtheorem{remark}[theorem]{Remark}

\newcommand{\Mod}[1]{\ (\mathrm{mod}\ #1)}

\numberwithin{equation}{section}



\newcolumntype{C}{>{\centering\arraybackslash}X}

\begin{document}

\title[Induced subgraphs with degree parity in Paley graphs]{On induced subgraphs with degree parity conditions in Paley graphs and Paley tournaments}


\author{Qilong Li}
\address{College of Science, National University of Defense Technology, 410073 Changsha, China}
\email{li.qilong@outlook.com}
\thanks{}
\author{Yue Zhou}
\address{College of Science, National University of Defense Technology, 410073 Changsha, China}
\email{yue.zhou.ovgu@gmail.com}
\thanks{}



\begin{abstract}
	In this paper, we investigate the number of induced subgraphs and subdigraphs of Paley graphs and Paley tournaments where the (out-)degree of each vertex has the same parity. For Paley graphs, we establish a lower bound for the number of large even induced subgraphs, particularly those containing a constant proportion of vertices. We determine the number of even-even partitions of Paley graphs, showing it is exponential if $q\equiv 1\Mod{8}$ and is trivial if $q\equiv 5\Mod{8}$, while proving the non-existence of even-even partition for Paley tournaments. Furthermore, we derive asymptotic formulas for the numbers of even induced sub(di)graphs of order $r=o(q^{1/4})$ in Paley graphs and Paley tournaments, demonstrating their concentration around the expected values in the corresponding random (di)graph models.\par 
	In the context of coding theory, we establish a correspondence between even/odd induced sub(di)graphs of Paley graphs (tournaments) and maximum distance separable (MDS) self-dual codes that can be constructed via (extended) generalized Reed-Solomon codes from subsets of finite fields. As a consequence, our contribution on induced subgraphs leads to new existence and counting results about MDS self-dual codes.
	
\end{abstract}
\keywords{Paley graphs, induced subgraphs, degree parity, MDS self-dual codes}

\maketitle

\section{Introduction}
Let $q$ be an odd prime power and let $\mathbb{F}_{q}$ be the finite field with $q$ elements. If $q\equiv 1\Mod{4}$, we define the \emph{Paley graph} $P_{q}$ on vertex set $\mathbb{F}_{q}$ by joining two vertices $x$ and $y$ with an edge if and only if $x-y\in\square_{q}$, where $\square_{q}$ stands for all square elements in $\mathbb{F}_{q}^*$. If $q\equiv 3\Mod{4}$, then the \emph{Paley tournament} $PT_{q}$ of order $q$ is the digraph on vertex set $\mathbb{F}_{q}$ where $(x,y)$ is an arc if and only if $x-y\in\square_{q}$; this is a well-defined tournament since $-1\notin\square_{q}$. Behaving in many ways like random (di)graphs with edge probability $p=1/2$, the Paley graphs (tournaments) are a famous family of \emph{quasi-random graphs (tournaments)}, which are commonly seen as explicit constructions of the random structures. The quasi-randomness of graphs and tournaments on $n$ vertices are rigorously defined by a sequence of conditions that are all equivalent \cite{chung1988quasi-random,chung1991quasi-random}, among which that the second-largest eigenvalue is $o(n)$ is sometimes the easiest to be verified. We refer to the book \cite{alon2016probabilistic} for more details.\par 
In the literature, various particular induced subgraphs of Paley graphs $P_{q}$ have been investigated, for example, the chromatic number and the Tur\'{a}n number of $P_{q}$ was studied in \cite{broere1988clique} and \cite{fox2025largest}, respectively. The cliques of $P_{q}$ have also been studied extensively \cite{cohen1988clique,baker1996maximal,bachoc2013squares,goryainov2018eigenfunctions,hanson2021refined,goryainov2022correspondence,yip2022maximal,yip2022clique}, partially due to their role in the determination of Ramsey numbers \cite{greenwood1955combinatorial,evans1981number,mathon1987lower,atanasov2014certain}.\par 
The existence of any small induced subgraph of constant order in $P_{q}$ and $PT_{q}$ is guaranteed by the definition of quasi-randomness, namely, for every quasi-random graph (resp. tournament) $G$ on $n$ vertices, the asymptotic number $\text{ind}(H,G)$ of labeled occurrences of any graph (resp. tournament) $H$ of constant order $r$ in $G$ is given by
\begin{equation*}
	\text{ind}(H,G)=\left(1+o(1)\right)n^{r}2^{-\binom{r}{2}}.
\end{equation*}
For all integers $r=r(q)\geq 3$, it was proved that $P_{q}$ contains any graph of order $r$ as an induced subgraph if $q$ is roughly $r^{2}4^{r}$ \cite[Theorem 3]{bollobas1981graphs}. More precisely, we have the following result.
\begin{theorem}[{\cite[Theorem 13.11]{bollobas2001random}}]\label{thm_paley.graph.r.full}
	Let $r\geq 3$ and let $q\equiv 1\Mod{4}$ be a prime power satisfying
	\begin{equation}\label{ineq_paley.graph.r.full.bound}
		q>\left((r-3)2^{r-2}+1\right)\sqrt{q}+(r-1)2^{r-2}.
	\end{equation}
	Then $P_{q}$ contains every graph of order $r$ as an induced subgraph.
\end{theorem}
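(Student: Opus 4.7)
The plan is to embed $H$ into $P_q$ one vertex at a time. Let $\chi$ denote the quadratic character of $\mathbb F_q$ (with $\chi(0)=0$), so that distinct vertices $x,y$ are adjacent in $P_q$ iff $\chi(x-y)=+1$. Label $V(H)=\{1,\dots,r\}$ and set $\epsilon_{ij}=+1$ if $ij\in E(H)$ and $\epsilon_{ij}=-1$ otherwise; the goal is to find distinct $v_1,\dots,v_r\in\mathbb F_q$ with $\chi(v_i-v_j)=\epsilon_{ij}$ for all $i\neq j$. I proceed by induction on $k$: supposing $v_1,\dots,v_{k-1}$ already realize $H[\{1,\dots,k-1\}]$, I estimate the number $N_k$ of admissible choices for $v_k$.

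Expanding the indicator product,
\begin{equation*}
N_k=\sum_{v_k\notin\{v_1,\dots,v_{k-1}\}}\prod_{j<k}\frac{1+\epsilon_{jk}\chi(v_k-v_j)}{2}=\frac{1}{2^{k-1}}\sum_{T\subseteq[k-1]}\epsilon_T\,S(T),
\end{equation*}
with $\epsilon_T=\prod_{j\in T}\epsilon_{jk}$ and $S(T)=\sum_{v_k\neq v_j\,\forall j<k}\prod_{j\in T}\chi(v_k-v_j)$. The main term is $S(\emptyset)=q-(k-1)$. For $T\neq\emptyset$, I would lift the restriction $v_k\neq v_j$ by writing
\begin{equation*}
S(T)=\sum_{v_k\in\mathbb F_q}\prod_{j\in T}\chi(v_k-v_j)-\sum_{j_0\in[k-1]\setminus T}\prod_{j\in T}\chi(v_{j_0}-v_j),
\end{equation*}
observing that contributions from $v_k=v_{j_0}$ with $j_0\in T$ vanish by $\chi(0)=0$.

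Weil's character sum bound then yields $\bigl|\sum_{v_k\in\mathbb F_q}\prod_{j\in T}\chi(v_k-v_j)\bigr|\le(|T|-1)\sqrt q$ (the argument $\prod_{j\in T}(x-v_j)$ is squarefree of positive degree on the distinct nodes), while the correction sum has at most $k-1-|T|$ unit-sized terms. Aggregating over $\emptyset\neq T\subseteq[k-1]$ via the binomial identities $\sum_{T\neq\emptyset}(|T|-1)=(k-3)2^{k-2}+1$ and $\sum_{T\neq\emptyset}(k-1-|T|)=(k-1)(2^{k-2}-1)$, I obtain
\begin{equation*}
N_k\ge\frac{1}{2^{k-1}}\Bigl[q-\bigl((k-3)2^{k-2}+1\bigr)\sqrt q-(k-1)2^{k-2}\Bigr].
\end{equation*}

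Since the constraint $q>((k-3)2^{k-2}+1)\sqrt q+(k-1)2^{k-2}$ tightens monotonically with $k\ge 2$, the hypothesis \eqref{ineq_paley.graph.r.full.bound} (which is exactly the $k=r$ case) ensures $N_k\ge 1$ at every stage, and the induction produces the desired induced copy of $H$. I expect the main obstacle to lie not in invoking Weil's bound but in the careful accounting for the ``distinct vertices'' constraint: the coincidence corrections must be controlled tightly enough for the error constants $(k-3)2^{k-2}+1$ and $(k-1)2^{k-2}$ to appear without slack, matching the sharp threshold in \eqref{ineq_paley.graph.r.full.bound}.
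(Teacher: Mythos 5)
Your proof is correct and is essentially the standard argument behind the cited result: the paper itself only quotes \cite[Theorem 13.11]{bollobas2001random}, whose proof is exactly this greedy one-vertex-at-a-time embedding, expanding the adjacency indicators into character sums, applying Weil's bound $\bigl|\sum_{x}\prod_{j\in T}\chi(x-v_j)\bigr|\le(|T|-1)\sqrt q$ to each nonempty $T$, and aggregating via $\sum_{T\neq\emptyset}(|T|-1)=(k-3)2^{k-2}+1$ and $\sum_{T\neq\emptyset}(k-1-|T|)=(k-1)(2^{k-2}-1)$ to recover the threshold \eqref{ineq_paley.graph.r.full.bound} exactly. Your bookkeeping of the coincidence corrections and the monotonicity in $k$ both check out, so nothing is missing.
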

In this paper, we direct our focus on \emph{even} and \emph{odd induced sub(di)graphs} of Paley graphs and Paley tournaments. We say that an induced subgraph $H$ of a graph $G$ is even (resp. odd) if the degree of each vertex $v\in V(H)$ in $H$, denoted by $\deg_{H}(v)$, is even (resp. odd). Even (resp. odd) induced subdigraphs $F$ of a digraph $D$ are defined likewise by considering the parity of the out-degree $\deg_{F}^{-}(v)$ of each vertex $v\in V(F)$.\par 
For any undirected graph $G$, a classical theorem of Gallai (see Theorem \ref{thm_gallai.even.partition}) shows that there exists a partition of $V(G)$ such that each part induces an even subgraph, ensuring the existence of even induced subgraphs of $G$ with at least $|V(G)|/2$ vertices. We call such partition $V(G)=V_{1}\sqcup V_{2}$ an \emph{even-even partition} and call $G[V_{1}]$ and $G[V_{2}]$ a pair of \emph{co-even induced subgraphs} of $G$. Assume that $|V_{1}|\geq|V(G)|/2$, then we say that $G[V_{1}]$ is a \emph{primary co-even induced subgraph} of $G$.\par 
Recently, the number of such partition is determined asymptotically for the random graphs with more general degree congruence conditions \cite{balister2023counting}. For the odd induced subgraphs, it was proved by Scott in \cite[Theorem 1]{scott2001induced} that $V(G)$ has a partition $V_{1},V_{2},...,V_{k}$ such that $G[V_{i}]$ are all odd induced subgraphs if and only if $|V(G)|$ is even. A famous conjecture, which was described by Caro \cite{caro1994induced} as “part of the graph theory folklore”, goes that for all graphs $G$ (without isolated vertices) of order $n$, there exists an absolute constant $c>0$ such that the largest order of the odd induced subgraph of $G$ is at least $cn$. This was ultimately confirmed by Ferber and Krivelevich \cite{ferber2022every} in 2022, showing the statement holds with constant $c=1/10000$.\par 
Nevertheless, for Paley graphs $P_{q}$, Gallai's theorem only ensures the existence of its primary even induced subgraph, which would be trivial since $P_{q}$ itself has all degrees even. In Section \ref{sect_linearly.sized.gallai.even.induced.subgraphs}, we concentrate on the number of the \emph{giant} even induced subgraph of arbitrary undirected simple graph $G$, viz.\,the induced subgraph that contains a constant proportion of vertices in $G$. As a consequence, we have the following theorem.
\begin{theorem}\label{thm_number.of.linear.size.even.induced.subgraph.lower.bound}
	Let $G$ be any undirected simple graph on $n$ vertices and let $0<\alpha<1$ be a constant. Then the number of even induced subgraphs of $G$ with order lying in the interval $[\alpha n/2,\alpha n]$ is at least
	\begin{equation*}
		2^{\left(1+o(1)\right)\left(H(\frac{\alpha}{2})-\alpha\right)n},
	\end{equation*}
	where $H(x)=-x\log_{2}{x}-(1-x)\log_{2}{(1-x)}$ for $0<x<1$ is the binary entropy function.
\end{theorem}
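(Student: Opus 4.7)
The plan is to combine Gallai's theorem on even-even partitions (Theorem \ref{thm_gallai.even.partition}) with an elementary pigeonhole argument. Set $r := \lfloor \alpha n \rfloor$. For each $r$-subset $S \in \binom{V(G)}{r}$, apply Gallai's theorem to the induced subgraph $G[S]$ to obtain an even-even partition $S = S_1(S) \sqcup S_2(S)$; normalize by fixing a canonical choice (say, the lexicographically smallest) with $|S_1(S)| \geq |S_2(S)|$. Then $G[S_1(S)]$ is itself an even induced subgraph of $G$, and $|S_1(S)| \in [\lceil r/2\rceil, r]$ so that, up to rounding absorbed into the $o(1)$ term, the order lies in $[\alpha n/2, \alpha n]$.

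This defines a map $\phi\colon \binom{V(G)}{r} \to \mathcal{E}_\alpha$, where $\mathcal{E}_\alpha$ is the family of even induced subgraphs of $G$ of order in $[\alpha n/2, \alpha n]$. For any target $U \in \mathcal{E}_\alpha$, an element $S$ in the fiber $\phi^{-1}(U)$ must satisfy $S \supseteq U$ and $|S| = r$, so $|\phi^{-1}(U)| \leq \binom{n - |U|}{r - |U|}$. A quick ratio computation shows $\binom{n-k}{r-k}$ is monotonically non-increasing in $k$ over the admissible range, so this fiber bound is maximized at $|U| = \lceil r/2 \rceil$.

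The pigeonhole principle then gives
\[
|\mathcal{E}_\alpha| \;\geq\; \frac{\binom{n}{r}}{\binom{n - \lceil r/2 \rceil}{\lceil r/2 \rceil}} \;=\; \frac{\binom{n}{\lceil r/2 \rceil}}{\binom{r}{\lceil r/2 \rceil}},
\]
where the equality uses the identity $\binom{n}{a}\binom{a}{b} = \binom{n}{b}\binom{n-b}{a-b}$ with $a = r$, $b = \lceil r/2 \rceil$. A routine Stirling estimate turns the right-hand side into $2^{(H(\alpha/2) - \alpha + o(1))n}$, which is the claimed bound.

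The argument is conceptually clean; the only genuine insight is the direction of the map. One starts at the \emph{upper} endpoint $r \approx \alpha n$ and shrinks via Gallai, rather than starting at size $\alpha n/2$ and attempting to extend to an even induced subgraph --- the latter strategy can fail outright (e.g.\ in $C_5$ with $S = \{v_1,v_2\}$, no even induced extension of order at most $4$ exists), so the naive pigeonhole in that direction is not available. Verifying the monotonicity of the fiber bound, applying the binomial identity, running the Stirling asymptotics, and absorbing the integer rounding into $o(1)$ are all routine, so no single step stands out as the main obstacle.
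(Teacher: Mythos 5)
Your proof is correct and is essentially the paper's argument: both apply Gallai's theorem to every $r$-set with $r\approx\alpha n$ and then double-count containments, bounding the number of $r$-sets containing a fixed even induced subgraph of order at least $r/2$ by $\binom{n-\lceil r/2\rceil}{\lfloor r/2\rfloor}$, which yields the identical bound $\binom{n}{r}\big/\binom{n-r/2}{r/2}=\binom{n}{n-r}\big/\binom{n-r/2}{n-r}$. The only cosmetic difference is that you phrase the count directly via fibers of a map $\phi$, whereas the paper routes the same counting through complements of a covering family; the rounding slips (e.g.\ $\lceil r/2\rceil$ vs.\ $r-\lceil r/2\rceil$ in the denominator) are harmlessly absorbed into the $o(1)$.
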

From this result, one can easily obtain a lower bound for the number of giant even induced subgraphs of $P_{q}$ by considering the graphs $G$ induced by any (constant) proportion of vertices in $P_{q}$. In Section \ref{sect_gallai.even.induced.subgraphs.paley} we determine the number of all co-even induced sub(di)graphs of $P_{q}$ by proving the following theorem.
\begin{theorem}\label{thm_number.of.all.co.even.induced.subgraphs.paley}
	Let $N$ denote the number of co-even induced subgraphs of $P_{q}$. Then
	\begin{equation*}
		N=\begin{dcases}
			2^{\frac{q+1}{2}} & \text{if $q\equiv 1\Mod{8}$},\\
			2 & \text{if $q\equiv 5\Mod{8}$}.
		\end{dcases}
	\end{equation*}
	In particular, if $q\equiv 5\Mod{8}$, then there exist only trivial co-even induced subgraphs of $P_{q}$, namely the empty graph and $P_{q}$ itself.
\end{theorem}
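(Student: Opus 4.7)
My plan is to encode the co-even condition as a linear constraint over $\mathbb{F}_2$ and then pin down the $\mathbb{F}_2$-rank of the adjacency matrix $A$ of $P_q$ using the strongly-regular relation together with the self-complementarity of $P_q$. For a subset $S \subseteq \mathbb{F}_q$ with characteristic vector $\chi \in \mathbb{F}_2^q$, the pair $(P_q[S], P_q[\mathbb{F}_q \setminus S])$ is a co-even partition iff $|N(v) \cap S| \equiv 0 \Mod{2}$ for every vertex $v$, i.e., $A\chi = 0$ in $\mathbb{F}_2^q$. Here one uses that $P_q$ is $(q-1)/2$-regular with $(q-1)/2$ always even when $q \equiv 1 \Mod{4}$, so the even-degree conditions inside $S$ and inside $\mathbb{F}_q \setminus S$ reduce to the same linear equation. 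Consequently $N = 2^{q - \mathrm{rank}_{\mathbb{F}_2}(A)}$, and the problem becomes that of determining $\mathrm{rank}_{\mathbb{F}_2}(A)$.

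The strongly-regular identity $A^2 = kI + \lambda A + \mu(J - I - A)$ with $(k,\lambda,\mu) = ((q-1)/2,(q-5)/4,(q-1)/4)$ reduces modulo $2$ to
\begin{equation*}
A^2 = A \text{ if } q \equiv 1 \Mod{8}, \qquad A(A+I) = I + J \text{ if } q \equiv 5 \Mod{8}.
\end{equation*}
In the second case, a direct check gives $\ker(I+J) = \langle \mathbf{1} \rangle$ (since $(I+J)v = 0$ forces $v \in \langle \mathbf{1} \rangle$ using $q$ odd), so $\mathrm{rank}(I+J) = q-1$. Combined with $A(A+I) = I + J$ this forces $\mathrm{rank}(A) \geq q-1$, and since $A\mathbf{1} = 0$ we also have $\mathrm{rank}(A) \leq q-1$. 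Hence $\dim \ker A = 1$ and $N = 2$, with the only solutions $\chi = 0$ and $\chi = \mathbf{1}$ corresponding to the trivial partition into the empty graph and $P_q$.

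The case $q \equiv 1 \Mod{8}$ requires squeezing the rank from both sides. Write $\bar A = A + I + J$ for the adjacency matrix of the complement $\overline{P_q}$ over $\mathbb{F}_2$. Using $AJ = 0$ (from $A\mathbf{1} = 0$) and $A^2 = A$, one computes $A\bar A = A^2 + A + AJ = 0$, so $\mathrm{Im}(\bar A) \subseteq \ker(A)$, giving $\mathrm{rank}(A) + \mathrm{rank}(\bar A) \leq q$. Since $P_q$ is self-complementary (multiplication by any non-square $\eta$ swaps $\square_q$ and $\mathbb{F}_q^* \setminus \square_q$), we have $\mathrm{rank}(A) = \mathrm{rank}(\bar A)$, and together with $q$ being odd this yields $\mathrm{rank}(A) \leq (q-1)/2$. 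For the reverse inequality, $A + \bar A = I + J$ over $\mathbb{F}_2$ gives $\mathrm{rank}(A) + \mathrm{rank}(\bar A) \geq \mathrm{rank}(I+J) = q-1$, hence $\mathrm{rank}(A) \geq (q-1)/2$. Equality then gives $\dim \ker A = (q+1)/2$ and $N = 2^{(q+1)/2}$.

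The main obstacle is the case $q \equiv 1 \Mod{8}$: the strongly-regular identity alone only says that $A$ is idempotent modulo $2$, which does not determine its rank (trace is $0$, so one only sees that the rank is even). The decisive extra input is the self-complementarity of $P_q$, forcing $\mathrm{rank}(A) = \mathrm{rank}(\bar A)$; paired with the two inequalities coming from $A\bar A = 0$ and $A + \bar A = I + J$ this squeezes the rank to exactly $(q-1)/2$.
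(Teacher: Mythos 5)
Your proof is correct, and it takes a genuinely different route from the paper's. You reduce the co-even condition directly to the homogeneous system $A\chi=0$ over $\mathbb{F}_2$ (using that the valency $(q-1)/2$ is even, so the parity condition at a vertex of $\mathbb{F}_q\setminus S$ coincides with the one coming from $S$), whereas the paper routes through Caro's odd-parity covers of the odd-extension $\overline{P_q}$ and the block matrix $\bigl(\begin{smallmatrix} A & I \\ I & 0\end{smallmatrix}\bigr)$; both reductions land on $N=2^{q-\mathrm{rank}_2(A)}$, but yours is more direct and avoids having to exhibit a particular solution of an inhomogeneous system. The real divergence is in computing $\mathrm{rank}_2(A)$: the paper imports the Smith group of $A$ from Chandler--Sin--Xiang, a substantial external result, while you derive the $2$-rank from first principles via the strongly regular identity reduced mod $2$ ($A^2=A$ when $q\equiv 1\Mod 8$, $A(A+I)=I+J$ when $q\equiv 5\Mod 8$), the computation $\ker(I+J)=\{0,\boldsymbol{1}\}$, and --- the decisive extra input in the $q\equiv 1\Mod 8$ case --- self-complementarity forcing $\mathrm{rank}_2(A)=\mathrm{rank}_2(\bar A)$, which together with $A\bar A=0$ and $A+\bar A=I+J$ squeezes the rank to exactly $(q-1)/2$. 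All the individual steps check out ($AJ=0$ from the even valency, rank subadditivity and submultiplicativity used correctly, $q$ odd used to rule out rank $q/2$). Your argument is essentially the classical elementary computation of the $2$-rank of Paley graphs in the spirit of Brouwer--van Eijl; what it buys is self-containedness and transparency about exactly which structural features of $P_q$ (strong regularity and self-complementarity) drive the answer, at the cost of a somewhat longer rank argument than simply quoting the known Smith normal form.
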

For digraphs, we define the \emph{even-even partition} and \emph{co-even induced subdigraphs} likewise to the undirected graphs. It turns out that for any tournament $T$, an even induced subdigraph of order $r$ in $T$ exists only if $r\equiv 0,1\Mod{4}$ (see Lemma \ref{lm_non.existence.co.even.induced.subgraphs.paley.directed}), therefore there does not exist co-even induced subgraph of $PT_{q}$.\par 
In Section \ref{sect_even.induced.subgraphs.paley.small}, asymptotic formulas for the number of even induced sub(di)graphs of order $r=r(q)$ will be given for both $P_{q}$ and $PT_{q}$ when $r=o(q^{1/4})$, showing that for $r=o(q^{1/4})$ the numbers of even induced sub(di)graphs of order $r$ concentrate around the expectations $2^{1-r}\binom{q}{r}$ and $2^{-r}\binom{q}{r}$ in the random (di)graphs (see Appendix \ref{appendix.a}), respectively. Here we use the Landau asymptotic notation, namely for real-valued functions $g(x)$ and $f(x)$, where $f(x)$ is positive, we write $g(x)=O(f(x))$ if there exists constant $C>0$ such that $|g(x)|\leq Cf(x)$ for all sufficiently large $x$; meanwhile, by $g(x)=o(f(x))$, or equivalently $f(x)=\omega(g(x))$, we mean $\lim_{x\to\infty}g(x)/f(x)=0$.
\begin{theorem}\label{thm_number.of.even.induced.subgraphs.paley.undirected}
	Let $N_{r}$ denote the number of even induced subgraphs of order $r$ in $P_{q}$. If $r=\omega(1)$ and $r=o(q^{1/4})$, then
	\begin{equation*}
		N_{r}=\left(1+o(1)\right)2^{1-r}\binom{q}{r}.
	\end{equation*}
\end{theorem}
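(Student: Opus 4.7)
The plan is to perform a Fourier expansion of the all-even-degrees constraint over $\mathbb{F}_2$ and reduce $N_{r}$ to a weighted sum of multiplicative character sums over $\mathbb{F}_{q}$, then control these via Weil's bound. Using the identity $\mathbf{1}[\deg_{H}(v)\text{ even}] = \tfrac{1}{2}(1+(-1)^{\deg_{H}(v)})$ and expanding the product, one gets
\begin{equation*}
r!\cdot N_{r} \;=\; \frac{1}{2^{r}}\sum_{S\subseteq[r]}\sum_{(x_{1},\ldots,x_{r})\text{ distinct}}\prod_{i\in S}(-1)^{\deg(x_{i})}.
\end{equation*}
Let $\chi$ denote the quadratic character of $\mathbb{F}_{q}$ (extended by $\chi(0)=0$). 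Since $x_{i}\sim x_{j}$ iff $\chi(x_{i}-x_{j})=1$, one has $(-1)^{\deg(x_{i})}=(-1)^{r-1}\prod_{j\neq i}\chi(x_{i}-x_{j})$. Taking the product over $i\in S$, pairs $\{a,b\}\subset S$ contribute $\chi(-(x_{a}-x_{b})^{2})=1$ (using $\chi(-1)=1$ from $q\equiv1\Mod{4}$), while pairs with exactly one endpoint in $S$ contribute $\chi(x_{a}-x_{b})$. The resulting key identity is
\begin{equation*}
\prod_{i\in S}(-1)^{\deg(x_{i})} \;=\; (-1)^{s(r-1)}\chi\Big(\prod_{a\in S,\;b\in[r]\setminus S}(x_{a}-x_{b})\Big),\qquad s=|S|.
\end{equation*}

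The extreme subsets $S=\emptyset$ and $S=[r]$ have empty interior product with character value $1$, and together produce the asserted main term $2^{1-r}\binom{q}{r}$. For each $\emptyset\neq S\neq[r]$, set $T=[r]\setminus S$ and denote
\begin{equation*}
E_{S} \;=\; \sum_{(x_{1},\ldots,x_{r})\text{ distinct}}\chi\Big(\prod_{a\in S,\,b\in T}(x_{a}-x_{b})\Big);
\end{equation*}
the remaining task is to prove $\sum_{S\neq\emptyset,[r]}|E_{S}|=o(q^{r})$. Factor the character as $\prod_{b\in T}\chi(\prod_{a\in S}(x_{b}-x_{a}))$ and fix the $x_{a}$ distinct; then each inner sum $\sum_{x_{b}\in\mathbb{F}_{q}}\chi(\prod_{a}(x_{b}-x_{a}))$ is the quadratic character sum of a squarefree polynomial of degree $s$, hence bounded by $(s-1)\sqrt{q}$ by Weil's theorem. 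Iterating this over the $r-s$ independent $x_{b}$'s and summing trivially over $x_{a}$'s gives
\begin{equation*}
|E_{S}| \;\leq\; (s-1)^{r-s}\,q^{(r+s)/2}
\end{equation*}
plus a sub-leading correction arising from the distinctness of the $x_{b}$'s; the symmetry $E_{S}=E_{T}$ (again via $\chi(-1)=1$) yields the dual bound $(r-s-1)^{s}q^{r-s/2}$, and one may use the smaller.

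Summing over $S$, substituting $m=r-s$, and using $s-1\leq r$,
\begin{equation*}
\frac{1}{q^{r}}\sum_{S\neq\emptyset,[r]}|E_{S}| \;\leq\; \sum_{m=1}^{r-1}\binom{r}{m}\Big(\tfrac{r}{\sqrt{q}}\Big)^{m} \;\leq\; \Big(1+\tfrac{r}{\sqrt{q}}\Big)^{r}-1 \;\leq\; \exp\!\big(r^{2}/\sqrt{q}\big)-1 \;=\; O(r^{2}/\sqrt{q}),
\end{equation*}
which is $o(1)$ exactly when $r=o(q^{1/4})$. Dividing by $r!\,2^{r}$ and comparing to the main term $\sim q^{r}/(r!\,2^{r-1})$ shows the error is $o(1)$ relative to the main term, yielding $N_{r}=(1+o(1))\,2^{1-r}\binom{q}{r}$. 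The principal obstacle will be the bookkeeping for the distinctness of the $x_{i}$'s inside $E_{S}$: a naive inclusion-exclusion introduces an $O(r^{2}q^{r-1})$ correction per $S$ that becomes unmanageable after summation over $2^{r}$ subsets once $r$ is polynomial in $q$. A clean remedy is to expand the inner distinct-tuple sum $A(z):=\sum_{y\text{ dist}}\prod_{i}\chi(\prod_{j}(y_{i}-z_{j}))$ via Möbius inversion on the partition lattice of $S$; the non-principal partitions decompose into products of the moments $F_{k}:=\sum_{y}\chi(\prod_{j}(y-z_{j}))^{k}$, which by Weil are $O(q)$ for even $k$ and $O(r\sqrt{q})$ for odd $k$, and a combinatorial check shows each such contribution is strictly dominated by the principal $F^{s}$ term for every relevant $z$ and $s$ in the range $r=o(q^{1/4})$.
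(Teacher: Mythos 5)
Your reduction is the same as the paper's: expand $\prod_{v}\tfrac12(1+(-1)^{\deg v})$, convert $(-1)^{\deg}$ into quadratic characters using $\chi(-1)=1$, identify the two extreme subsets $S=\emptyset,[r]$ as the main term $2^{1-r}\binom{q}{r}$, and bound the remaining $2^{r}-2$ cross-terms by Weil so that the relative error is $O(r^{2}/\sqrt{q})$. Where you genuinely diverge is in the key estimate. The paper packages the inner sum over the $k$ free points as the coefficient of $\texttt{t}^{k}$ in the generating function $\prod_{u}(1+\eta(f_{W}(u))\texttt{t})$ and extracts it with Cauchy's inequality (a saddle-point bound), which handles the distinctness of the free points automatically and yields $|A_{k}(W)|\leq\binom{q}{k}\bigl(C(r-k+\sqrt{k})/\sqrt{q}\bigr)^{k}$; you instead iterate Weil over the free variables one at a time and then repair distinctness by M\"obius inversion on the partition lattice. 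Your route does work, and it is arguably more elementary (no complex analysis), but it is not cheaper: as you yourself note, the naive inclusion--exclusion is unusable, and the partition-lattice computation is the real content. One sub-claim you make there is false as stated: the non-principal partitions are \emph{not} all dominated by the principal term $F_{1}^{|T|}$. For $s=1$ one has $F_{1}=\sum_{y}\chi(y-x_{a})=0$, so the principal term vanishes while the all-even-block partitions contribute on the order of $q^{|T|/2}$; more generally for bounded $s$ the even blocks (each worth $q-s$) are comparable to $((s-1)\sqrt{q})^{2}$. What is true, and what you should prove instead, is that the \emph{total} over all partitions is controlled: bounding a block of size $b$ by $(b-1)!\,s\,q^{b/2}$ and using
\begin{equation*}
\sum_{\pi\vdash[m]}s^{\ell(\pi)}\prod_{B\in\pi}(|B|-1)! \;=\; s(s+1)\cdots(s+m-1)\;\leq\; r^{m}
\end{equation*}
gives $|E_{S}|\leq q^{s}\,r^{\,r-s}\,q^{(r-s)/2}$, i.e.\ your claimed bound with $s-1$ replaced by $r$, which still sums to $O(r^{2}/\sqrt{q})$ and closes the argument for all $r=o(q^{1/4})$. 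With that lemma repaired, your proof is correct and matches the paper's error term.
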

\begin{theorem}\label{thm_number.of.even.induced.subgraphs.paley.directed}
	Let $\tilde{N}_{r}$ denote the number of even induced subdigraphs of order $r$ in $PT_{q}$. If $r=\omega(1)$, $r=o(q^{1/4})$ and $r\equiv 0,1\Mod{4}$, then
	\begin{equation*}
		\tilde{N}_{r}=\left(1+o(1)\right)2^{1-r}\binom{q}{r}.
	\end{equation*}
\end{theorem}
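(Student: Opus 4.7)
The plan is to encode the parity condition by character sums and extract a main term plus a negligible error, mirroring the approach driving Theorem \ref{thm_number.of.even.induced.subgraphs.paley.undirected} but carefully tracking the extra signs arising from $\chi(-1)=-1$ (valid since $q\equiv 3\Mod 4$), where $\chi$ denotes the quadratic character on $\mathbb{F}_q$. Since $(-1)^{\mathbf{1}[\chi(x)=1]}=-\chi(x)$ for $x\neq 0$, one has $(-1)^{\deg^-(v_i)}=(-1)^{r-1}\prod_{j\neq i}\chi(v_i-v_j)$ whenever the $v_j$'s are distinct. Expanding the indicator that all out-degrees are even as $2^{-r}\sum_{I\subseteq[r]}\prod_{i\in I}(-1)^{\deg^-(v_i)}$ and collapsing the pairs $(i,j)\in I\times I$ via $\chi(v_i-v_j)\chi(v_j-v_i)=\chi(-1)=-1$, one arrives at
\[
\tilde N_r=\frac{1}{r!\,2^r}\sum_{I\subseteq[r]}(-1)^{(r-1)|I|+\binom{|I|}{2}}S_I,\qquad S_I:=\sum_{\substack{v_1,\ldots,v_r\in\mathbb{F}_q\\\text{distinct}}}\prod_{\substack{i\in I\\ j\notin I}}\chi(v_i-v_j).
\]
Only $I=\emptyset$ and $I=[r]$ carry an empty product, each with $S_I=r!\binom{q}{r}$; the corresponding signs $+1$ and $(-1)^{\binom{r}{2}}$ agree exactly under the hypothesis $r\equiv 0,1\Mod 4$, producing the predicted main term $2^{1-r}\binom{q}{r}$.

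The remaining work is to show $\sum_{0<|I|<r}|S_I|$ is of smaller order. Swapping $I$ with $I^c$ multiplies the product by $\chi(-1)^{k(r-k)}$, so $|S_I|=|S_{I^c}|$ and we may assume $k:=|I|\leq r/2$. Fixing the parameters $A=(a_1,\ldots,a_k)$ corresponding to $I$, the inner summand factorises as $\prod_\ell\chi(f_A(b_\ell))$ with $f_A(x):=\prod_\ell(a_\ell-x)$ a squarefree polynomial of degree $k$, and Weil's theorem gives $|\sum_{x\in\mathbb{F}_q}\chi(f_A(x))|\leq(k-1)\sqrt{q}$. To handle the distinctness of the $b_\ell$'s we would apply M\"obius inversion on the set-partition lattice: since $\sum_x\chi(f_A(x))^s$ equals $\sum_x\chi(f_A(x))$ for odd $s$ and $q-k$ for even $s$ (because $\chi^2$ is the indicator of non-vanishing), the exponential formula telescopes the M\"obius sum into the closed form
\[
T(A):=\sum_{\substack{b_1,\ldots,b_{r-k}\in\mathbb{F}_q\\\text{distinct}}}\prod_\ell\chi(f_A(b_\ell))=(r-k)!\,[z^{r-k}]\,(1+z)^{(q-k+\Sigma)/2}(1-z)^{(q-k-\Sigma)/2},
\]
with $\Sigma:=\sum_x\chi(f_A(x))$. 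Factoring the integrand as $(1-z^2)^{(q-k-|\Sigma|)/2}(1\pm z)^{|\Sigma|}$ and estimating the coefficients termwise using $|\Sigma|\leq(k-1)\sqrt{q}$ then yields $|T(A)|\leq C^r\,r!\,q^{(r-k)/2}$ for some absolute constant $C$.

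Multiplied by the $q^k$ choices of $A$, this gives $|S_I|\leq C^r\,r!\,q^{(r+k)/2}\leq C^r\,r!\,q^{3r/4}$ for $k\leq r/2$. Summing over the $2^r$ subsets $I$ and dividing by $r!\,2^r$, the total error contributes $O(C^r q^{3r/4})$ to $\tilde N_r$, which relative to the main term $\sim q^r/(r!\,2^{r-1})$ is at most $O((2Cer)^r/q^{r/4})$ (using $r!\leq r^r$), and is therefore $o(1)$ precisely when $r=o(q^{1/4})$. The principal obstacle is the tension between Weil's square-root saving and the distinctness constraint: a naive inclusion-exclusion on the partition lattice would cost a Bell-number factor and overwhelm the Weil saving, so the whole point of the closed-form identity for $T(A)$ above is to exploit $\chi^2=\mathbf{1}_{\neq 0}$ to telescope the M\"obius expansion into a binomial coefficient sum just small enough to close the argument under the hypothesis $r=o(q^{1/4})$.
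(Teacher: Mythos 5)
Your argument is correct and reaches the same decomposition as the paper: expand the parity indicator as $2^{-r}\sum_{I}\prod_{i\in I}(-1)^{\deg^{-}(v_i)}$, convert to quadratic-character sums, read off the main term $2^{1-r}\binom{q}{r}$ from $I=\emptyset$ and $I=[r]$ using $(-1)^{\binom{r}{2}}=1$ for $r\equiv 0,1\Mod{4}$, and control the remaining $I$ via Weil's bound. Where you genuinely diverge is in the execution of the key estimate. The paper (Lemma \ref{lemma_saddle.point.estimate}) bounds the coefficient of $t^{k}$ in $F(t)=\prod_{u}\left(1+\eta(f_W(u))t\right)$ by a saddle-point argument: it splits $\log|F(t)|$ into even and odd power sums, applies Weil to the odd ones, and optimizes the radius in Cauchy's inequality. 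You instead observe that this same generating function factors \emph{exactly} as $(1+z)^{n_+}(1-z)^{n_-}$ with $n_{\pm}=(q-k\pm\Sigma)/2$, so the coefficient is an explicit alternating binomial sum that can be bounded termwise; your ``telescoped M\"obius sum'' is precisely this factorization, and it makes the complex-analytic machinery unnecessary. You also transpose the roles of the two vertex classes --- fixing the small set $A$ so that the Weil bound is applied to the degree-$k$ polynomial $f_A$, after using $|S_I|=|S_{I^c}|$ to force $k\le r/2$ --- and as a result your relative error bound $\left((Cr)q^{-1/4}\right)^{r}$ is considerably sharper than the paper's $O(r^2/\sqrt{q})$ over most of the range, though both become $o(1)$ exactly under the hypothesis $r=o(q^{1/4})$. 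The only details to spell out in a polished write-up are that $f_A$ is squarefree of degree $k$ because the entries of $A$ are distinct (tuples with repeated entries do not occur in $S_I$, so $q^{k}$ is a harmless overcount of the admissible $A$), and that the distinctness of the $b_{\ell}$ from the $a_{j}$ comes for free in the closed form for $T(A)$ since $\chi(f_A(b))=0$ for $b\in A$.
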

As an application of these results, we find a correspondence between the sub(di)graph and a large family of MDS self-dual codes which are constructed via (extended) GRS codes from subsets of finite fields. The construction of these codes has been a thriving research area in the past decade, with numerous explicit constructions based on the arithmetic properties of the subsets of $\mathbb{F}_{q}$ (e.g., \cite{jin2017new,yan2019note,fang2019new,lebed2022some,wan2023newmds,fang2026construction}). Wide ranges of possible length $n$ of MDS self-dual codes (which corresponds to the order of induced sub(di)graphs) were provided, while restrictions on the field size $q$ and other parameters are imposed and there is a lack of global existence or enumeration results. Our contribution on the induced sub(di)graphs leads to new existence and counting results on MDS self-dual codes, which are summarized in Section \ref{sect_applications.to.mds.self.dual}.\par 
Finally, we note that for induced subgraphs with the degree parity conditions in Paley graph $P_{q}$, studying the even induced subgraphs will suffice. This is because an odd induced subgraph $H$ of $P_{q}$ must be of even order, hence the subgraph of the complement of $P_{q}$ induced by $V(H)$ must have all degrees even. Therefore the number of odd induced subgraphs of $P_{q}$ equals the number of the even induced subgraphs of $P_{q}$ of even order, given that $P_{q}$ is self-complementary. Meanwhile, a similar argument applies to the odd induced subdigraphs of even order in $PT_{q}$, noting that $PT_{q}$ is self-converse.

\section{Preliminaries}\label{sect_preliminaries}
\subsection{Even-even partition and induced subgraphs with degree parity conditions}
The even and odd induced subgraphs in arbitrary graphs have been studied extensively. The following theorem guarantees the existence for even-even partition in any graph, see \cite[Problem 5.17]{lovasz2007combinatorial} for a proof.
\begin{theorem}[Gallai's Theorem]\label{thm_gallai.even.partition}
	Let $G$ be any graph. Then there exists a partition $V(G)=V_{1}\sqcup V_{2}$ such that both $G[V_{1}]$ and $G[V_{2}]$ are even induced subgraphs of $G$.
\end{theorem}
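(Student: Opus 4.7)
The plan is to recast the problem as a linear system over $\mathbb{F}_2$. Let $n=|V(G)|$, let $A\in\mathbb{F}_2^{n\times n}$ be the adjacency matrix of $G$, let $d\in\mathbb{F}_2^{n}$ be the vector of degrees reduced modulo $2$, and let $D$ be the diagonal matrix with $D_{vv}=d_{v}$. For a candidate subset $V_{1}\subseteq V(G)$ with indicator vector $x\in\mathbb{F}_2^{n}$ and complement $V_{2}$, the degree of $v$ in $G[V_{1}]$ is $(Ax)_{v}$ when $v\in V_{1}$, while the degree of $v$ in $G[V_{2}]$ is $d_{v}+(Ax)_{v}\pmod{2}$ when $v\in V_{2}$. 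Requiring both to be even for every $v$ collapses to the single linear equation $(A+D)x=d$ over $\mathbb{F}_2$, so the theorem reduces to showing this system is solvable.

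To establish solvability, I would invoke the standard duality $\operatorname{Im}(M)=\operatorname{Ker}(M^{T})^{\perp}$ for $\mathbb{F}_2$-matrices with respect to the standard bilinear form. Since $A+D$ is symmetric, the task reduces to verifying $z\cdot d=0$ for every $z\in\mathbb{F}_2^{n}$ with $(A+D)z=0$.

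The decisive computation will be to evaluate $z^{T}(A+D)z$ in two ways. On the one hand it vanishes because $(A+D)z=0$. On the other hand, $z^{T}Az=2\sum_{uv\in E(G)}z_{u}z_{v}=0$ in $\mathbb{F}_2$, using that $A$ is symmetric with zero diagonal, while $z^{T}Dz=\sum_{v}d_{v}z_{v}^{2}=z\cdot d$ using $z_{v}^{2}=z_{v}$. Combining these identities forces $z\cdot d=0$, hence $d\in\operatorname{Im}(A+D)$ and a solution $x$ exists, producing the desired partition.

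The main, relatively minor, obstacle is the characteristic-$2$ linear-algebra bookkeeping: one has to use the symmetry of $A+D$ to identify $\operatorname{Ker}((A+D)^{T})$ with $\operatorname{Ker}(A+D)$, and the key identity $z^{T}Az=0$ over $\mathbb{F}_2$ rests squarely on $A$ having zero diagonal (it would fail for graphs with loops). Beyond this subtlety the argument is routine, and one could equivalently package it as a parity double-counting over edges within the chosen vertex set rather than as matrix algebra.
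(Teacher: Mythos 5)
Your proof is correct, and the reduction is exactly right: the two parity conditions (one for $v\in V_{1}$, one for $v\in V_{2}$) do collapse to the single system $(A+D)x=d$ over $\mathbb{F}_{2}$, and the solvability argument via $\operatorname{Im}(M)=\operatorname{Ker}(M^{T})^{\perp}$ together with $z^{T}Az=0$ (zero diagonal) and $z^{T}Dz=z\cdot d$ (idempotence of $\mathbb{F}_{2}$) is the standard and complete way to finish. For comparison: the paper does not prove this theorem itself but cites Lov\'asz (Problem 5.17), whose solution is essentially the linear-algebra argument you give; what the paper actually works with is Caro's reformulation via odd-parity covers of the odd-extension $\overline{G}$, i.e.\ the system $(\overline{A}+I)X=\boldsymbol{1}$ obtained by attaching a pendant vertex to each even-degree vertex. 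The two are literally the same system: eliminating the pendant coordinates $Q_{v'}=1+Q_{v}$ from $(\overline{A}+I)X=\boldsymbol{1}$ recovers your $(A+D)x=d$. Your direct form is cleaner for a bare existence proof, while the paper's odd-extension form pays off later (Theorem \ref{thm_number.of.all.co.even.induced.subgraphs.paley}) because the augmented matrix $\overline{A}+I$ has a uniform block shape whose $2$-rank is easy to compute from $\operatorname{rank}_{2}(A)$, which is what drives the enumeration of all co-even partitions. Your closing caveat is apt: the identity $z^{T}Az=0$ genuinely requires $A$ to have zero diagonal, i.e.\ the graph to be loopless, which is the setting of the paper.
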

An interpretation of Theorem \ref{thm_gallai.even.partition} using linear algebra was given by Caro \cite{caro1996simple}. Let $G$ be any undirected simple graph (i.e. without loops and multiple edges). Let $N_{G}[v]$ denote the closed neighborhood of $v$ for each vertex $v\in V(G)$, namely the set $\left\{w\in V(G)\mid v\sim w\right\}\cup\left\{v\right\}$. A subset $Q\subseteq V(G)$ is called an \emph{odd-parity cover} if for every vertex $v\in V(G)$, $|N_{G}[v]\cap Q|\equiv 1\Mod{2}$. Let $A$ denote the adjacency matrix of $G$ and write the characteristic vector of vertex set $Q$ as $\boldsymbol{1}_{Q}$. Then it is straightforward to see that $Q$ is an odd-parity cover if and only if $(A+I)\boldsymbol{1}_{Q}=\boldsymbol{1}$ over $\mathbb{F}_{2}$, where $\boldsymbol{1}$ stands for the all-ones vector.\par 
In the proof of \cite[Theorem 6]{caro1996simple} the following result was given, where the \emph{odd-extension} of graph $G$, written $\overline{G}$, is defined to be the graph formed by attaching every vertex $v\in V(G)$ of even degree a new vertex $v'$ that is adjacent only to $v$.
\begin{theorem}[\cite{caro1996simple}]\label{thm_caro.odd.parity.cover.and.gallai.partition}
	Let $G$ be any graph and suppose that $W$ is an odd-parity cover of $\overline{G}$. Then the set $W\cap V(G)$ and its complement in $V(G)$ form an even-even partition of $G$.
\end{theorem}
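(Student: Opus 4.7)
The plan is to unpack the odd-parity-cover condition on $\overline{G}$ into parity conditions on $G$ itself, then do a short case analysis on the parity of $\deg_G(v)$ and on which part contains $v$. Write $V_1=W\cap V(G)$ and $V_2=V(G)\setminus V_1$; the goal is to show that for every $v\in V(G)$, the number of neighbors of $v$ in its own part is even.

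First I would exploit the pendant vertices. For each vertex $v\in V(G)$ of even $G$-degree, let $v'$ denote its pendant in $\overline{G}$; then $N_{\overline{G}}[v']=\{v,v'\}$, and the cover condition forces exactly one of $v$ and $v'$ to lie in $W$. Applying the cover condition at $v$ itself, one has $N_{\overline{G}}[v]=N_G[v]\cup\{v'\}$, so $|N_G[v]\cap W|+[v'\in W]\equiv 1\pmod 2$. Substituting $[v'\in W]=1-[v\in W]$ and using $|N_G[v]\cap W|=|N_G(v)\cap W|+[v\in W]$, one obtains $|N_G(v)\cap W|\equiv 0\pmod 2$. For a vertex $v\in V(G)$ of odd $G$-degree there is no pendant, so $N_{\overline{G}}[v]=N_G[v]$, and the cover condition reduces directly to $|N_G(v)\cap W|\equiv 1-[v\in W]\pmod 2$.

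Now split into cases. If $v\in V_1$ (i.e.\ $v\in W$), then $\deg_{G[V_1]}(v)=|N_G(v)\cap W|$; this is even in both parity cases by the identities above (directly even when $\deg_G(v)$ is even, and congruent to $1-1=0$ when $\deg_G(v)$ is odd). If $v\in V_2$, then $\deg_{G[V_2]}(v)=\deg_G(v)-|N_G(v)\cap W|$; when $\deg_G(v)$ is even, both summands are even, and when $\deg_G(v)$ is odd, $|N_G(v)\cap W|\equiv 1-0=1\pmod 2$, so the difference of two odd numbers is again even. Hence in every case $v$ has even degree inside its own part, which is exactly the even-even partition condition.

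There is no real obstacle here beyond careful bookkeeping: the only subtlety is to remember that the pendant $v'$ belongs to $V(\overline{G})\setminus V(G)$, so it contributes to $W$ but not to $V_1$ or $V_2$, which is why the identity $[v'\in W]=1-[v\in W]$ is needed to eliminate the auxiliary vertices from the parity equations and reduce everything to a statement about $N_G(v)\cap W$.
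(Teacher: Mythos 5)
Your proof is correct and complete: the case analysis on $\deg_G(v)$ (which determines whether the pendant $v'$ exists) combined with the identity $[v'\in W]=1-[v\in W]$ correctly reduces the cover condition to $|N_G(v)\cap W|\equiv 0$ or $1-[v\in W]\pmod 2$, and the final four-way check is sound. The paper itself states this theorem as a cited result from Caro without reproducing a proof, and your argument is exactly the standard direct parity bookkeeping one would expect there.
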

In $\S$\ref{subsect_characterization.co-even.induced.subgraphs}, we will furthermore prove that there actually exists a one-to-one correspondence between even-even partitions of $G$ and odd-parity covers of $\overline{G}$. Hence, the problem of enumerating co-even induced subgraphs of any undirected graph $G$ is equivalent to determining the solution space of equation $(\overline{A}+I)X=\boldsymbol{1}$ over $\mathbb{F}_{2}$, where $\overline{A}$ stands for the adjacency matrix of $\overline{G}$.\par 
Let $T$ be any tournament. Then for the existence of even and odd induced subdigraphs of $T$ we have the following result.
\begin{lemma}\label{lm_non.existence.co.even.induced.subgraphs.paley.directed}
	Let $S$ be any subset of $V(T)$. Then $T[S]$ is an even induced subdigraph of $T$ only if $|S|\equiv 0,1\Mod{4}$ and $T[S]$ is an odd induced subdigraph only if $|S|\equiv 0,3\Mod{4}$.
\end{lemma}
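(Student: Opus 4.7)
The plan is to exploit the basic identity that in any tournament on $r = |S|$ vertices, the sum of out-degrees equals the number of arcs, which is $\binom{r}{2}$, since every arc contributes exactly one to the out-degree of its tail. Apply this to the subtournament $T[S]$ to obtain
\begin{equation*}
	\sum_{v\in S}\deg^{-}_{T[S]}(v)=\binom{r}{2}.
\end{equation*}
From here everything reduces to matching the parity of the left-hand side (dictated by the assumed parity of each out-degree together with $r$) with the parity of $\binom{r}{2} = r(r-1)/2$, which in turn depends on $r\bmod 4$.

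First I would treat the even case. If $T[S]$ is even, the left-hand side is a sum of even integers and is thus even, so $r(r-1)\equiv 0\pmod 4$. Since exactly one of the two consecutive integers $r, r-1$ is even, this forces that even one to be divisible by $4$; equivalently, $r\equiv 0$ or $r\equiv 1\pmod 4$.

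Next I would treat the odd case. If $T[S]$ is odd, the left-hand side is congruent to $r$ modulo $2$, so the condition becomes $r\equiv\binom{r}{2}\pmod 2$. Running through the four residues of $r$ modulo $4$ gives: $r\equiv 0$ yields $r$ even and $\binom{r}{2}$ even (consistent); $r\equiv 1$ yields $r$ odd but $\binom{r}{2}$ even (inconsistent); $r\equiv 2$ yields $r$ even but $\binom{r}{2}$ odd (inconsistent); $r\equiv 3$ yields $r$ odd and $\binom{r}{2}$ odd (consistent). Hence odd induced subdigraphs can only have $|S|\equiv 0$ or $3\pmod 4$.

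There is essentially no obstacle here beyond a careful mod-$4$ case analysis; the only subtlety is recognizing that the out-degree sum identity $\sum_{v}\deg^{-}(v)=\binom{r}{2}$ is what links the uniform parity hypothesis on the out-degrees to a divisibility condition on $r$, and that the converse direction (existence) is not claimed and therefore need not be addressed.
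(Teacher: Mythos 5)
Your proof is correct and follows exactly the paper's argument: the paper likewise observes that $\sum_{v\in S}\deg^{-}_{T[S]}(v)=|S|(|S|-1)/2$ and compares the parity of this quantity (even in the all-even case, congruent to $|S|$ mod $2$ in the all-odd case) with the residue of $|S|$ modulo $4$. Your mod-$4$ case analysis is accurate, so nothing further is needed.
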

The proof of Lemma \ref{lm_non.existence.co.even.induced.subgraphs.paley.directed} is straightforward, since for each induced subdigraph $T[S]$ we have $|S|(|S|-1)/2=\sum_{v\in S}\deg_{T[S]}^{-}(v)$, the parity of which is even when every $\deg_{T[S]}^{-}(v)$ is even, and is the same as the parity of $|S|$ when every $\deg_{T[S]}^{-}(v)$ is odd.
\subsection{Motivation from coding theory and related problems}\label{subsect_motivation.from.coding.theory}
For the prime power $q$, a $q$-ary $[n,k,d]$-linear code $\mathcal{C}$ is a $k$-dimensional subspace of $\mathbb{F}_{q}^{n}$ with minimum Hamming distance $d$. It is well-known that parameters $n,k$ and $d$ must be subject to the \emph{Singleton bound} $d\leq n-k+1$. The code $\mathcal{C}$ is called a \emph{maximum distance separable code}, or simply an \emph{MDS code} if the equality is attained. For two vectors $u=(u_{1},u_{2},...,u_{n})$ and $w=(w_{1},w_{2},...,w_{n})$ in $\mathbb{F}_{q}^{n}$, the \emph{Euclidean inner product} of $u$ and $w$ is defined by $\langle u,w\rangle=\sum_{i=1}^{n}u_{i}w_{i}\in\mathbb{F}_{q}$. The \emph{(Euclidean) dual code} $\mathcal{C}^{\perp}$ of $\mathcal{C}$ is then defined by
\begin{equation*}
	\mathcal{C}^{\perp}:=\left\{u\in\mathbb{F}_{q}^{n}\mid\langle u,w\rangle=0~\text{for all $w\in\mathcal{C}$}\right\}.
\end{equation*}
If $\mathcal{C}=\mathcal{C}^{\perp}$, we say that $\mathcal{C}$ is \emph{(Euclidean) self-dual}. If $\mathcal{C}$ is both self-dual and MDS, we say that $\mathcal{C}$ is an \emph{MDS self-dual code}.\par 
It is clear that for each MDS self-dual $[n,k,d]$-linear code $\mathcal{C}$ we have $n=2k$ and therefore $d=n/2+1$, so the parameters of $\mathcal{C}$ are completely determined by its length $n$. The study of MDS self-dual codes usually focuses on the possible length $n$. This was completely solved by Grassl and Gulliver in \cite{grassl2008self-dual} if $q$ is even, who showed that in this case there exist MDS self-dual codes of length $n$ for every $n\leq q$; while the existence and the quantity of MDS self-dual codes over the finite fields with odd characteristic remain open.\par 
In the literature, one of the most commonly used tool to construct explicit families of MDS self-dual codes is the \emph{generalized Reed-Solomon codes}, or simply the \emph{GRS codes}, see for instance \cite{lebed2019construction,fang2019new,yan2019note,zhang2020unified,fang2020new}. We note that a GRS code over $\mathbb{F}_{q}$ associated with vectors $\boldsymbol{\alpha}=(\alpha_{1},\alpha_{2},...,\alpha_{n})$ and $\boldsymbol{v}=(v_{1},v_{2},...,v_{n})$ in $\mathbb{F}_{q}^{n}$, where the $\alpha_{i}$'s are distinct and the $v_{i}$'s are non-zero, is defined by
\begin{equation*}
	GRS_{k}(\boldsymbol{\alpha},\boldsymbol{v}):=\left\{(v_{1}f(\alpha_{1}),v_{2}f(\alpha_{2}),...,v_{n}f(\alpha_{n}))\mid f(x)\in\mathbb{F}_{q}[x],~\deg{(f(x))}\leq k-1\right\}.
\end{equation*}
It is well-known that the code $GRS_{k}(\boldsymbol{\alpha},\boldsymbol{v})$ is a $q$-ary $[n,k,n-k+1]$-MDS code and that its dual is also a GRS code. Furthermore, consider the \emph{extended GRS code} of $GRS_{k}(\boldsymbol{\alpha},\boldsymbol{v})$ given by
\begin{equation*}
	GRS_{k}(\boldsymbol{\alpha},\boldsymbol{v},\infty):=\left\{(v_{1}f(\alpha_{1}),v_{2}f(\alpha_{2}),...,v_{n}f(\alpha_{n}),f_{k-1})\mid f(x)\in\mathbb{F}_{q}[x],~\deg{(f(x))}\leq k-1\right\},
\end{equation*}
where $f_{k-1}$ stands for the coefficient of $x^{k-1}$ in $f(x)$. The code $GRS_{k}(\boldsymbol{\alpha},\boldsymbol{v},\infty)$ is a $q$-ary $[n+1,k,n-k+2]$-MDS code. The following result provides unified criteria for (extended) GRS codes to be MDS self-dual (see \cite[Theorem 1]{zhang2020unified}).
\begin{theorem}\label{thm_grs.self.dual.unified.criteria}
	Let $\mathcal{S}=\left\{\alpha_{1},\alpha_{2},...,\alpha_{n}\right\}$ be a non-empty subset of $\mathbb{F}_{q}$ and define
	\begin{equation*}
		\Delta_{\mathcal{S}}(\alpha_{i})=\prod_{1\leq j\leq n,j\neq i}(\alpha_{i}-\alpha_{j})\in\mathbb{F}_{q}^*
	\end{equation*}
	for each $1\leq i\leq n$. Then we have:\par 
	(i) (\cite{jin2017new}) Suppose $n$ is even. There exists $\boldsymbol{v}\in(\mathbb{F}_{q}^*)^{n}$ such that $GRS_{n/2}(\boldsymbol{\alpha},\boldsymbol{v})$ is self-dual if and only if all $\eta\left(\Delta_{\mathcal{S}}(\alpha_{i})\right)$, $1\leq i\leq n$ are the same;\par 
	(ii) (\cite{yan2019note}) Suppose $n$ is odd. There exists $\boldsymbol{v}\in(\mathbb{F}_{q}^*)^{n}$ such that $GRS_{(n+1)/2}(\boldsymbol{\alpha},\boldsymbol{v},\infty)$ is self-dual if and only if $\eta\left(-\Delta_{\mathcal{S}}(\alpha_{i})\right)=1$ for all $1\leq i\leq n$.
\end{theorem}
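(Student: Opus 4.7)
The plan is to reduce both parts to the classical formula that identifies the dual of a (extended) generalized Reed--Solomon code as another (extended) GRS code on the same evaluation set. First I would recall the standard GRS duality,
\[
  GRS_k(\boldsymbol{\alpha},\boldsymbol{v})^{\perp}=GRS_{n-k}(\boldsymbol{\alpha},\boldsymbol{v}^{*}),\qquad v_i^{*}=\bigl(v_i\,\Delta_{\mathcal{S}}(\alpha_i)\bigr)^{-1},
\]
which follows from the Lagrange-interpolation identity $\sum_{i=1}^n\alpha_i^{\ell}\,\Delta_{\mathcal{S}}(\alpha_i)^{-1}=0$ for $0\le\ell\le n-2$ and $=1$ for $\ell=n-1$. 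For the extended code, this $\ell=n-1$ remainder is cancelled by the contribution of the infinity coordinate to the inner product, yielding the analogous identity
\[
  GRS_k(\boldsymbol{\alpha},\boldsymbol{v},\infty)^{\perp}=GRS_{n+1-k}(\boldsymbol{\alpha},-\boldsymbol{v}^{*},\infty),
\]
with the same $v_i^*$, the minus sign being forced by the above cancellation.

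For part~(i), self-duality requires $k=n/2$ and then amounts to $GRS_{n/2}(\boldsymbol{\alpha},\boldsymbol{v})=GRS_{n/2}(\boldsymbol{\alpha},\boldsymbol{v}^{*})$. Since two GRS codes sharing evaluation set and dimension coincide iff their multiplier vectors are proportional, this collapses to the existence of a scalar $\lambda\in\mathbb{F}_q^{*}$ with $v_i^{2}\,\Delta_{\mathcal{S}}(\alpha_i)=\lambda^{-1}$ for every $i$. A valid $\boldsymbol{v}\in(\mathbb{F}_q^{*})^n$ exists precisely when $\lambda$ can be chosen so that each $\lambda^{-1}\Delta_{\mathcal{S}}(\alpha_i)^{-1}$ is a nonzero square, which is possible if and only if all $\eta(\Delta_{\mathcal{S}}(\alpha_i))$ share a common value, matching~(i).

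For part~(ii) I would run the same argument on $GRS_{(n+1)/2}(\boldsymbol{\alpha},\boldsymbol{v},\infty)$. Crucially, the infinity coordinate $f_{k-1}$ is common to the code and its dual and admits no nontrivial rescaling, so the proportionality scalar is pinned to $1$ rather than left free. Self-duality therefore collapses to the equation $\boldsymbol{v}=-\boldsymbol{v}^{*}$, i.e. $v_i^{2}\cdot\bigl(-\Delta_{\mathcal{S}}(\alpha_i)\bigr)=1$ for every $i$, which admits a solution in $(\mathbb{F}_q^{*})^n$ iff $-\Delta_{\mathcal{S}}(\alpha_i)$ is a nonzero square for every $i$, i.e.\ $\eta(-\Delta_{\mathcal{S}}(\alpha_i))=1$ for all $i$.

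The main obstacle, in my view, is the extended case: one has to verify carefully both that the sign in the duality formula is exactly $-1$ (by tracking the single nonzero term in the Lagrange identity against the inner product of the infinity column with itself) and that two extended GRS codes with the same evaluation set and dimension coincide \emph{only} when their multiplier vectors are equal (not merely proportional). This rigidity, absent in the unextended case, is precisely what freezes the scaling constant and yields the sharper condition $\eta(-\Delta_{\mathcal{S}}(\alpha_i))=1$ rather than a ``common character'' statement as in~(i).
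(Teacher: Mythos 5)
The paper does not prove this statement: Theorem \ref{thm_grs.self.dual.unified.criteria} is imported verbatim from the cited references (\cite{zhang2020unified}, with (i) from \cite{jin2017new} and (ii) from \cite{yan2019note}), so there is no in-paper argument to compare against. Your reconstruction is correct and is essentially the standard proof from those sources: the dual-of-GRS formula with $v_i^{*}=(v_i\Delta_{\mathcal{S}}(\alpha_i))^{-1}$, the sign $-1$ in the extended case (forced by the $\ell=n-1$ term of the Lagrange identity cancelling against the infinity coordinate), and the resulting square conditions all check out, including the asymmetry between ``all $\eta(\Delta_{\mathcal{S}}(\alpha_i))$ equal'' and ``all $\eta(-\Delta_{\mathcal{S}}(\alpha_i))=1$''. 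The only ingredient you use without proof that genuinely needs one is the rigidity lemma: for $1\le k\le n-1$, two (extended) GRS codes with the same evaluation set and dimension coincide iff their multipliers are proportional (resp.\ equal); this follows by comparing the codewords obtained from $f=1$ and $f=x^{k-1}$ and using that polynomials of degree $<n$ agreeing on the $n$ points $\alpha_i$ are identical. An alternative, slightly more economical route used in \cite{jin2017new} avoids this lemma by noting that a code of dimension $n/2$ is self-dual iff self-orthogonal, reducing (i) to $\sum_i v_i^2\alpha_i^{m}=0$ for $0\le m\le n-2$ and to the fact that the solution space of that system is the line spanned by $\bigl(\Delta_{\mathcal{S}}(\alpha_i)^{-1}\bigr)_i$.
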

Observe that for each subset $\mathcal{S}\subseteq\mathbb{F}_{q}$ with $|\mathcal{S}|=n$, if $n$ is even, then $\eta(\Delta_{\mathcal{S}}(\alpha_{i}))=1$ for all $1\leq i\leq n$ if and only if $\mathcal{S}$ induces a sub(di)graph with all (out-)degrees even. Likewise, if $n$ is odd, then $\eta(\Delta_{\mathcal{S}}(\alpha_{i}))=1$ for all $1\leq i\leq n$ if and only if $\mathcal{S}$ induces sub(di)graph with all (out-)degrees odd, etc. Therefore, finding MDS self-dual codes over $\mathbb{F}_{q}$ that can be constructed via (extended) GRS codes is equivalent to finding the induced sub(di)graphs with the degree parity conditions in $P_{q}$ and $PT_{q}$. See Section \ref{sect_applications.to.mds.self.dual} for more details.
\subsection{Binomial coefficients}
We list several standard facts on the binomial coefficients that will be used, some of which can be found in \cite{das2016brief} and \cite[Section 1.1]{jukna2011extremal}. All the following asymptotic results can be derived from Stirling's formula. Note that we have inequalities
\begin{equation*}
	m\log{\frac{n}{m}}\leq\log{\binom{n}{m}}\leq m\left(\log{\frac{n}{m}}+1\right)
\end{equation*}
for all positive integers $m\leq n$.
\begin{lemma}\label{lemma_binomial.coefficient.constant}
	If $m$ is a constant, then
	\begin{equation*}
		\binom{n}{m}=\left(1+o(1)\right)\frac{n^{m}}{m!}.
	\end{equation*}
\end{lemma}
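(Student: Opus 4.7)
The plan is to start from the elementary factored form
$\binom{n}{m} = \frac{n(n-1)(n-2)\cdots(n-m+1)}{m!}$,
which holds for every integer $n \geq m$. Under the hypothesis of the lemma, $m$ is held constant while $n \to \infty$, so the numerator is a product of exactly $m$ factors, each of the shape $n - i$ with $0 \leq i \leq m-1$.

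I would then pull one factor of $n$ out of every term to rewrite the numerator as $n^{m}\prod_{i=0}^{m-1}\left(1 - \frac{i}{n}\right)$. For each fixed $i$ the factor $1 - i/n$ equals $1 + o(1)$ as $n \to \infty$, and because the number of such factors is the constant $m$, their product is still of the form $1 + o(1)$. Dividing by $m!$ yields $\binom{n}{m} = (1+o(1))\,n^{m}/m!$, which is the claim.

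There is essentially no genuine obstacle here; this is a routine asymptotic identity being recorded as a named lemma for later reference. The only point that deserves a brief comment is that closing $m$ factors of the form $1 + o(1)$ into a single $1 + o(1)$ is legitimate precisely because $m$ does not grow with $n$. If instead $m$ were permitted to tend to infinity with $n$, one would need to invoke Stirling's formula or a careful bound on $\sum_{i=0}^{m-1} i/n$, as in the displayed inequality $m\log(n/m) \leq \log\binom{n}{m} \leq m\bigl(\log(n/m) + 1\bigr)$ recorded just above the lemma statement; but that regime is explicitly outside the scope of this result.
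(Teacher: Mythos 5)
Your proof is correct and complete. The paper does not actually prove this lemma: it is listed among several standard binomial-coefficient facts with the blanket remark that they ``can be derived from Stirling's formula,'' together with citations to standard references. Your argument takes a genuinely more elementary route, expanding $\binom{n}{m}=\frac{1}{m!}\prod_{i=0}^{m-1}(n-i)=\frac{n^{m}}{m!}\prod_{i=0}^{m-1}\left(1-\tfrac{i}{n}\right)$ and observing that a product of a \emph{constant} number of $1+o(1)$ factors is again $1+o(1)$. This avoids Stirling entirely, which is appropriate here since Stirling's formula is only needed for the companion lemmas where $m\to\infty$; your closing remark correctly identifies that the constancy of $m$ is the one point on which the argument genuinely relies, and that the regime $m=\omega(1)$ is handled by the subsequent lemmas rather than this one.
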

\begin{lemma}\label{lemma_binomial.coefficients.sqrt.size}
	If $m$ satisfies $m=\omega(1)$ and $m=o(\sqrt{n})$, then
	\begin{equation*}
		\binom{n}{m}=\left(1+o(1)\right)\frac{1}{\sqrt{2\pi m}}(\frac{\mathrm{e}n}{m})^{m}.
	\end{equation*}
\end{lemma}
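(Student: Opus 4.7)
The plan is to reduce the binomial coefficient to a ratio $n^{m}/m!$ times a small correction factor, and then to estimate each piece separately using Stirling's formula and elementary logarithm bounds.

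First I would rewrite
\begin{equation*}
\binom{n}{m} = \frac{n^{m}}{m!}\prod_{i=0}^{m-1}\left(1 - \frac{i}{n}\right).
\end{equation*}
Since $m = \omega(1)$ is in force, Stirling's formula in the form $m! = (1+o(1))\sqrt{2\pi m}(m/\mathrm{e})^{m}$ applies, so $n^{m}/m! = (1+o(1))\frac{1}{\sqrt{2\pi m}}(\mathrm{e}n/m)^{m}$, which is already the right-hand side of the claimed asymptotic. It then suffices to show that the correction product equals $1 + o(1)$.

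For this, I would take logarithms. Since $i/n \le m/n = o(1/\sqrt{n})$ for every $i < m$ by the hypothesis $m = o(\sqrt{n})$, the Taylor expansion $\log(1-x) = -x + O(x^{2})$ applies uniformly across the range of summation, and summing yields
\begin{equation*}
\log\prod_{i=0}^{m-1}\left(1 - \frac{i}{n}\right) = -\sum_{i=0}^{m-1}\frac{i}{n} + O\left(\sum_{i=0}^{m-1}\frac{i^{2}}{n^{2}}\right) = -\frac{m(m-1)}{2n} + O\left(\frac{m^{3}}{n^{2}}\right).
\end{equation*}
The first term is $O(m^{2}/n) = o(1)$ and the second is bounded by $m\cdot(m/n)^{2} = o(1)$, both directly from $m = o(\sqrt{n})$. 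Exponentiating gives $\prod_{i=0}^{m-1}(1 - i/n) = 1+o(1)$, and multiplying through yields the asserted estimate.

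I do not expect any real obstacle here; the argument is routine calculus. The only mild subtlety worth recording is that $m = o(\sqrt{n})$ is precisely the threshold at which the correction factor ceases to be $1+o(1)$: without this hypothesis, the sum $m(m-1)/(2n)$ no longer vanishes and one picks up a nontrivial multiplicative factor $\exp(-m^{2}/(2n) + \cdots)$, so the stated form of the asymptotic is sharp under the given regime.
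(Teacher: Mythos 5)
Your proof is correct and follows exactly the route the paper indicates: the paper states this lemma without a written proof, citing only that it ``can be derived from Stirling's formula,'' and your argument (Stirling for $m!$ plus the elementary estimate $\prod_{i=0}^{m-1}(1-i/n)=1+o(1)$ via $\log(1-x)=-x+O(x^2)$ under $m=o(\sqrt{n})$) is the standard derivation being referenced. Your closing remark that $m=o(\sqrt{n})$ is precisely the threshold for the correction factor to vanish is also accurate.
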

\begin{lemma}\label{lemma_binomial.coefficients.larger.than.sqrt.size}
	If $m$ satisfies $m=\omega(1)$, $m=o(n)$ and $m\neq o(\sqrt{n})$, then
	\begin{equation*}
		\binom{n}{m}=\exp{\left\{\left(1+o(1)\right)m\log{\frac{n}{m}}\right\}}.
	\end{equation*}
\end{lemma}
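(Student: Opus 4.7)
The plan is to read off the conclusion directly from the sandwich inequality stated immediately before the lemma,
\[
m\log\frac{n}{m}\;\leq\;\log\binom{n}{m}\;\leq\;m\log\frac{n}{m}+m,
\]
valid for all integers $1\leq m\leq n$. The first step is to observe that under the hypothesis $m=o(n)$, the ratio $n/m$ tends to infinity, so $\log(n/m)$ is positive and diverges. In particular, for all sufficiently large $n$, the quantity $m\log(n/m)$ is strictly positive, and one may legitimately divide the sandwich through by it.

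Doing so produces
\[
1\;\leq\;\frac{\log\binom{n}{m}}{m\log(n/m)}\;\leq\;1+\frac{1}{\log(n/m)},
\]
and since $\log(n/m)\to\infty$ the upper bound is $1+o(1)$. Therefore $\log\binom{n}{m}=(1+o(1))\,m\log(n/m)$, and exponentiating yields the claimed asymptotic. I expect no genuine obstacle here: the proof is essentially a one-liner once the preceding inequality is in hand, and the only item to verify is that the error term $1/\log(n/m)$ vanishes, which reduces to the assumption $m=o(n)$.

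It is worth noting that the hypotheses $m=\omega(1)$ and $m\neq o(\sqrt n)$ are not actually needed in the derivation; they are recorded purely to indicate that Lemma \ref{lemma_binomial.coefficients.larger.than.sqrt.size} is intended to cover the regime complementary to Lemmas \ref{lemma_binomial.coefficient.constant} and \ref{lemma_binomial.coefficients.sqrt.size}. In those finer regimes one needs Stirling's formula to capture the prefactors $1/m!$ and $1/\sqrt{2\pi m}$ explicitly, whereas the coarser $\exp\{(1+o(1))\,\cdot\,\}$ form used here absorbs such constants into the error. A fully self-contained alternative is to apply Stirling to each of $n!$, $m!$ and $(n-m)!$ and to Taylor-expand $(n-m)\log(n-m)=(n-m)\log n+(n-m)\log(1-m/n)$, collecting the leading term $m\log(n/m)$ together with corrections of size $m$, $m^2/n$, $m^3/n^2$ and $\log m$, each of which is $o(m\log(n/m))$ under $m=o(n)$; but this route merely reproduces the sandwich with explicit lower-order terms and adds no essential content.
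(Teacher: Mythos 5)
Your proof is correct and matches the paper's intended derivation: the paper states this lemma without a separate proof, relying precisely on the displayed sandwich $m\log(n/m)\leq\log\binom{n}{m}\leq m(\log(n/m)+1)$ together with the observation that $m=o(n)$ forces $\log(n/m)\to\infty$. Your remark that the hypotheses $m=\omega(1)$ and $m\neq o(\sqrt{n})$ serve only to delimit the regime relative to the neighbouring lemmas is also accurate.
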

\begin{lemma}\label{lemma_binomial.coefficients.linear.size}
	If $m=\Omega(n)$, then
	\begin{equation*}
		\binom{n}{m}=2^{\left(1+o(1)\right)H(\frac{m}{n})n},
	\end{equation*}
	where $H(x)=-x\log_{2}{x}-(1-x)\log_{2}{(1-x)}$ for $0<x<1$ is the binary entropy function.
\end{lemma}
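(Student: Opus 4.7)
The plan is to derive the estimate directly from Stirling's approximation. Recall that for every positive integer $k$, Stirling gives
\begin{equation*}
\log(k!) = k\log k - k + \tfrac{1}{2}\log(2\pi k) + O(1/k),
\end{equation*}
so in particular $\log(k!) = k\log k - k + O(\log k)$. I will work throughout with natural logarithms and convert to base $2$ at the end.

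First I would expand
\begin{equation*}
\log\binom{n}{m} = \log(n!) - \log(m!) - \log((n-m)!),
\end{equation*}
apply Stirling to each of the three factorials, and note that the linear contributions $-n+m+(n-m)$ cancel. The error accumulated is $O(\log n) + O(\log m) + O(\log(n-m)) = O(\log n)$, so
\begin{equation*}
\log\binom{n}{m} = n\log n - m\log m - (n-m)\log(n-m) + O(\log n).
\end{equation*}
Writing $n\log n = m\log n + (n-m)\log n$ and collecting terms turns the main part into $-m\log(m/n) - (n-m)\log((n-m)/n)$. Dividing through by $\log 2$ then yields
\begin{equation*}
\log_2\binom{n}{m} = n\,H(m/n) + O(\log n).
\end{equation*}

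To recast this as $\binom{n}{m} = 2^{(1+o(1))\,n\,H(m/n)}$, the additive error must satisfy $O(\log n) = o(n\,H(m/n))$. Under the hypothesis $m = \Omega(n)$, read in the standard symmetric sense that $m/n$ lies in a closed subinterval of $(0,1)$ bounded away from both endpoints, $H(m/n)$ is bounded below by a positive constant, so $n\,H(m/n) = \Theta(n)$ and the error is indeed negligible. Exponentiating in base $2$ produces the claimed formula. I do not anticipate any real obstacle—this is a textbook consequence of Stirling, and the only point of care is the absorption of the $O(\log n)$ term; in the degenerate regime where $m/n$ approaches $0$ or $1$ the preceding Lemma~\ref{lemma_binomial.coefficients.larger.than.sqrt.size} handles the count instead.
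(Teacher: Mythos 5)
Your derivation is correct and follows exactly the route the paper itself indicates: the lemma is stated in the preliminaries with the remark that ``all the following asymptotic results can be derived from Stirling's formula,'' and your Stirling expansion with cancellation of the linear terms and absorption of the $O(\log n)$ error into $(1+o(1))nH(m/n)$ is the standard argument. Your explicit caveat that $m=\Omega(n)$ must be read with $m/n$ bounded away from $0$ and $1$ is appropriate, since that is precisely how the lemma is applied in the paper (e.g.\ with $m=(1-\alpha)n$ and $m=(1-\tfrac{1}{2}\alpha)n$).
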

The identity
\begin{equation*}
	\binom{n}{m}\binom{m}{\ell}=\binom{n}{\ell}\binom{n-\ell}{m-\ell}
\end{equation*}
will also be used in the context.

\section{Giant even induced subgraphs of undirected graphs}\label{sect_linearly.sized.gallai.even.induced.subgraphs}
Let $G$ be any undirected simple graph on $n$ vertices. We study the even induced subgraphs of order within $[\theta(n)/2,\theta(n)]$ in $G$ for all integer-valued functions $\theta(n)$ that satisfies $1\leq\theta(n)\leq n$, particularly the giant even induced subgraphs of $G$, namely considering the case when $\theta(n)=\alpha n$ for some constant $0<\alpha<1$.\par 
Recall that for every $W\subseteq V(G)$ with cardinality $\theta(n)$, i.e. $W\in\binom{V(G)}{\theta(n)}$, Theorem \ref{thm_gallai.even.partition} provides a primary co-even induced subgraph of $G[W]$, which has order within $[\theta(n)/2,\theta(n)]$. While it might be the case that different subsets of $V(G)$ produce common even induced subgraphs, and the even-even partition of a given subset $W\in\binom{V(G)}{\theta(n)}$ may not be unique. Therefore, we consider the number of co-even induced subgraphs of all $G[W]$, where $W\in\binom{V(G)}{\theta(n)}$, and provide a lower bound for the number of all even induced subgraphs $H$ of $G$ satisfying $\theta(n)/2\leq V(H)\leq\theta(n)$ by the following result.
\begin{theorem}\label{thm_number.of.arbitray.size.even.induced.subgraph.lower.bound}
	Let $\theta(n)$ be an integer-valued function of $n$ such that $1\leq\theta(n)\leq n$. Then the number of even induced subgraphs with order lying in the interval $[\theta(n)/2,\theta(n)]$ of any undirected simple graph $G$ on $n$ vertices is at least
	\begin{equation*}
		\binom{n}{n-\theta(n)}\Big\slash\binom{n-\frac{1}{2}\theta(n)}{n-\theta(n)}.
	\end{equation*}
	\begin{proof}
		Consider the primary co-even induced subgraphs of $G[W]$ for all sets $W\in\binom{V(G)}{\theta(n)}$, namely considering the set
		\begin{equation*}
			\Theta:=\left\{W'\subseteq W\mid W\in\binom{V(G)}{\theta(n)}~~\text{and $G[W']$ is a primary co-even induced subgraph of $G[W]$}\right\}.
		\end{equation*}\par 
		Let us define $\mathcal{E}_{\theta}$ to be the collection of the subset families $E_{\theta}$ of $V(G)$ where each $E_{\theta}$ in $\mathcal{E}_{\theta}$ satisfies the following two properties (P$_{1}$) and (P$_{2}$):\par 
		(P$_{1}$) Every set in $E_{\theta}$ has size at least $\theta(n)/2$;\par 
		(P$_{2}$) For every $W\in\binom{V(G)}{\theta(n)}$, there exists at least one $U\in E_{\theta}$ such that $U\subseteq W$.\par 
		\noindent Then it is straightforward to see that $\Theta\in\mathcal{E}_{\theta}$, which yields
		\begin{equation}\label{ineq_minimum.release.constraints}
			\min\left\{|E_{\theta}|\mid E_{\theta}\in\mathcal{E}_{\theta}\right\}\leq|\Theta|.
		\end{equation}
		For each $E_{\theta}\in\mathcal{E}_{\theta}$, let
		\begin{equation*}
			\widetilde{E_{\theta}}=\left\{Y\subseteq V(G)\mid V(G)\backslash Y\in E_{\theta}\right\}.
		\end{equation*}
		Then it is straightforward to see that $|E_{\theta}|=|\widetilde{E_{\theta}}|$, and every $\widetilde{E_{\theta}}$ satisfies the following complementary conditions of (P$_{1}$) and (P$_{2}$):\par 
		(P$_{1}'$) Every set in $\widetilde{E_{\theta}}$ has size less than $n-\theta(n)/2$;\par 
		(P$_{2}'$) For every $W\in\binom{V(G)}{n-\theta(n)}$, there exists $Y\in\widetilde{E_{\theta}}$ such that $W\subseteq Y$.\par 
		Since for each $Y\in\widetilde{E_{\theta}}$, there exist at most $\binom{n-\theta(n)/2}{n-\theta(n)}$ subsets $W\in\binom{V(G)}{n-\theta(n)}$ that are contained in $Y$, we have
		\begin{equation}\label{ineq_lower.bound.greedy.counting}
			|E_{\theta}|=|\widetilde{E_{\theta}}|\geq\frac{\binom{n}{n-\theta(n)}}{\binom{n-\frac{1}{2}\theta(n)}{n-\theta(n)}}
		\end{equation}
		for every $E_{\theta}\in\mathcal{E}_{\theta}$. Combining \eqref{ineq_minimum.release.constraints} and \eqref{ineq_lower.bound.greedy.counting} yields the result.
	\end{proof}
\end{theorem}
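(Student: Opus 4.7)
The plan is to apply Gallai's theorem (Theorem \ref{thm_gallai.even.partition}) systematically to every $\theta(n)$-subset of $V(G)$ in order to produce many distinct primary co-even induced subgraphs, and then to extract the lower bound via an abstract covering-counting argument on their complements.

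First, for each $W \in \binom{V(G)}{\theta(n)}$, Gallai's theorem applied to $G[W]$ yields a subset $W' \subseteq W$ with $|W'| \geq \theta(n)/2$ such that $G[W']$ is an even induced subgraph of $G$; note $|W'| \leq |W| = \theta(n)$, so $|W'|$ lies in the desired interval. Let $\Theta$ be the collection of all such $W'$ obtained as $W$ ranges over $\binom{V(G)}{\theta(n)}$. Every element of $\Theta$ is counted at most once, and each induces an even subgraph of $G$ of the prescribed order, so $|\Theta|$ is a valid lower bound for the quantity we want to estimate.

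Next, I would abstract the two structural properties that $\Theta$ possesses into an auxiliary class $\mathcal{E}_\theta$ of set families: (P$_1$) each member has cardinality at least $\theta(n)/2$, and (P$_2$) every $\theta(n)$-subset of $V(G)$ contains some member of the family. Since $\Theta \in \mathcal{E}_\theta$, it suffices to lower-bound the minimum cardinality over $\mathcal{E}_\theta$. Here the key move is to pass to complements: for $E_\theta \in \mathcal{E}_\theta$, set $\widetilde{E_\theta} := \{V(G) \setminus U : U \in E_\theta\}$, which has the same cardinality but now satisfies the dual conditions that every set has size strictly less than $n - \theta(n)/2$ and that every $(n-\theta(n))$-subset of $V(G)$ is \emph{contained in} some member.

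The final step is a double-counting of pairs $(X,Y)$ with $X \in \binom{V(G)}{n-\theta(n)}$, $Y \in \widetilde{E_\theta}$, and $X \subseteq Y$. By (P$_2'$) every such $X$ appears at least once, so there are at least $\binom{n}{n-\theta(n)}$ pairs; on the other hand, by (P$_1'$) each $Y$ contributes at most $\binom{|Y|}{n-\theta(n)} \leq \binom{n-\theta(n)/2}{n-\theta(n)}$ pairs, giving the desired lower bound after dividing. I expect the main obstacle to be conceptual rather than computational: namely, recognizing that the bookkeeping over all choices of $W$ is most cleanly handled by decoupling Gallai's theorem from the enumeration, and that complementation is what converts an awkward ``covered from below'' condition into a standard ``covered from above'' condition amenable to a one-line counting argument.
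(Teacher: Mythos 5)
Your proposal is correct and follows essentially the same route as the paper's proof: forming $\Theta$ from the primary co-even parts given by Gallai's theorem, abstracting to the covering family $\mathcal{E}_\theta$, complementing to convert (P$_1$), (P$_2$) into (P$_1'$), (P$_2'$), and finishing with the same counting of containments (your explicit double count of pairs $(X,Y)$ is just a restatement of the paper's one-line bound). No gaps.
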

We mention that one cannot extend this proof of Theorem \ref{thm_number.of.arbitray.size.even.induced.subgraph.lower.bound} to the digraphs since even-even partition does not always exist in a digraph.\par 
Observe that Theorem \ref{thm_number.of.linear.size.even.induced.subgraph.lower.bound} follows directly from Theorem \ref{thm_number.of.arbitray.size.even.induced.subgraph.lower.bound}, since by Lemma \ref{lemma_binomial.coefficients.linear.size} we have
\begin{equation*}
	\begin{split}
		\binom{n}{(1-\alpha)n}\Big\slash\binom{(1-\frac{1}{2}\alpha)n}{(1-\alpha)n}=&2^{\left(1+o(1)\right)\left(H(1-\alpha)n-H(\frac{1-\alpha}{1-\frac{1}{2}\alpha})(1-\frac{1}{2}\alpha)n\right)}\\
		=&2^{\left(1+o(1)\right)\left(H(\frac{\alpha}{2})-\alpha\right)n}.
	\end{split}
\end{equation*}
Moreover, it can be easily checked that
\begin{equation*}
	H\left(\frac{\alpha}{2}\right)-\alpha>0\quad\text{for all $0<\alpha<1$},
\end{equation*}
from which we have the following examples, where the floor and ceiling signs are omitted for clarity.
\begin{example}
	The number of even induced subgraphs $H$ satisfying that $H$ is the primary co-even induced subgraph of some $G[W]$ with $|W|=0.99n$ is at least $2^{\left(1+o(1)\right)0.00993n}$.
\end{example}
The aim of presenting this example is to show that, even if one cannot use Theorem \ref{thm_gallai.even.partition} to produce even induced subgraph of any order by deleting vertices of $G$ iteratively (since this procedure terminates once the remaining subgraph $G[W]$ is already even), we still have an exponential lower bound for the number of even induced subgraphs that one can obtain from $G[W]$ by deleting a very small proportion of vertices $V(G)\backslash W$.\par 
\begin{example}
	Setting $\alpha=2/3$ we obtain at least $2^{\left(1+o(1)\right)0.25163n}$ giant induced subgraphs of order lying in the interval $[n/3,2n/3]$ around $n/2$.
\end{example}
For integrity, we also note that by taking $\theta(n)=n^{\beta}$ with constant $0<\beta<1$, one obtains that the number of even induced subgraphs $H$ of $G$ satisfying $n^{\beta}/2\leq V(H)\leq n^{\beta}$, if $1/2\leq\beta<1$, is at least
\begin{equation}\label{eq_number.of.even.induced.subgraphs.larger.than.square.lower.bound}
	\begin{split}
		\binom{n}{n-n^{\beta}}\Big\slash\binom{n-\frac{1}{2}n^{\beta}}{n-n^{\beta}}=&\binom{n}{n^{\beta}}\Big\slash\binom{n-\frac{1}{2}n^{\beta}}{\frac{1}{2}n^{\beta}}\\
		=&\exp\left\{\left(1+o(1)\right)\frac{1-\beta}{2}n^{\beta}\log{n}\right\}
	\end{split}
\end{equation}
following from Theorem \ref{thm_number.of.arbitray.size.even.induced.subgraph.lower.bound} and Lemma \ref{lemma_binomial.coefficients.larger.than.sqrt.size}. If $0<\beta<1/2$, then the number of even induced subgraphs $H$ satisfying $n^{\beta}/2\leq V(H)\leq n^{\beta}$ is at least
\begin{equation}\label{eq_number.of.even.induced.subgraphs.less.than.square.lower.bound}
	\binom{n}{n^{\beta}}\Big\slash\binom{n-\frac{1}{2}n^{\beta}}{\frac{1}{2}n^{\beta}}=\left(1+o(1)\right)\frac{\sqrt{2}}{2}\left(\frac{\mathrm{e}n^{1-\beta}}{2}\right)^{\frac{1}{2}n^{\beta}},
\end{equation}
following likewise from Theorem \ref{thm_number.of.arbitray.size.even.induced.subgraph.lower.bound} and Lemma \ref{lemma_binomial.coefficients.sqrt.size}.\par 
However, the lower bounds given by \eqref{eq_number.of.even.induced.subgraphs.larger.than.square.lower.bound} and \eqref{eq_number.of.even.induced.subgraphs.less.than.square.lower.bound} do not seem to be sharp as $n\to\infty$, given that the expected number of even induced subgraphs of order $r=n^{\beta}$ in random graph $G\in G_{n,p}$ is roughly $\binom{n}{r}2^{-r+1}$ by Proposition \ref{prop_expectation.of.number.of.even.induced.subgraphs.undirected}, where
\begin{equation*}
	\binom{n}{r}2^{-r+1}=2\exp\left\{\left(1+o(1)\right)(1-\beta)n^{\beta}\log{n}-n^{\beta}\log{2}\right\}\gtrsim\exp\left\{\left(1+o(1)\right)\frac{1-\beta}{2}n^{\beta}\log{n}\right\}
\end{equation*}
for all $1/2\leq\beta<1$ and
\begin{equation*}
	\binom{n}{r}2^{-r+1}\sim\sqrt{\frac{2}{\pi n^{\beta}}}\left(\frac{\mathrm{e}n^{1-\beta}}{2}\right)^{n^{\beta}}\gtrsim\frac{\sqrt{2}}{2}\left(\frac{\mathrm{e}n^{1-\beta}}{2}\right)^{\frac{1}{2}n^{\beta}}
\end{equation*}
for all $0<\beta<1/2$.

\section{Co-even induced subgraphs of Paley graphs}\label{sect_gallai.even.induced.subgraphs.paley}
\subsection{Characterization of co-even induced subgraphs}\label{subsect_characterization.co-even.induced.subgraphs}
As a complement to Theorem \ref{thm_caro.odd.parity.cover.and.gallai.partition}, for each graph $G$ we show the one-to-one correspondence between co-even induced subgraphs of $G$ and odd-parity covers of $\overline{G}$.
\begin{proposition}\label{prop_the.number.of.gallai.partition.equals.odd.parity.cover.in.odd.extension}
	Let $G$ be a graph. Then the number of co-even induced subgraphs of $G$ equals the number of odd-parity covers of the odd-extension $\overline{G}$.
	\begin{proof}
		Let $\Gamma_{e}$ be the set of the vertex sets of all co-even induced subgraphs of $G$ and let $\Gamma_{o}$ be the set of all odd-parity covers of $\overline{G}$. Then by Theorem \ref{thm_caro.odd.parity.cover.and.gallai.partition} we have a well-defined mapping
		\begin{equation*}
			\begin{split}
				\varphi:\Gamma_{o}&\to\Gamma_{e}\\
				Q&\mapsto Q\cap V(G).
			\end{split}
		\end{equation*}
		We define the mapping $\phi:\Gamma_{e}\to\Gamma_{o}$ by setting
		\begin{equation*}\label{eq_co.even.induced.subgraph.extend.to.odd.parity.cover}
			\phi(V_{1})=V_{1}\cup\left\{v'\mid v\in V(G)\backslash V_{1}~\text{and}~\deg_{G}(v)\equiv 0\Mod{2}\right\}
		\end{equation*}
		for each $V_{1}\in\Gamma_{e}$, where $v'$ is the extra vertex adjacent only to $v$. then it is straightforward to see that $\varphi$ is onto since we always have $V_{1}=\varphi(\phi(V_{1}))$. Therefore $|\Gamma_{e}|\leq|\Gamma_{o}|$.\par 
		For each $Q\in\Gamma_{o}$, we claim that for each $v'\in V(\overline{G})\backslash V(G)$,
		\begin{equation*}
			v'\in Q\iff v\notin Q\cap V(G)~~\text{and}~~\deg_{G}(v)\equiv 0\Mod{2}.
		\end{equation*}
		Actually, if $v'\in Q$, then we have $v\notin Q$ since otherwise $|N_{G}[v']\cap Q|=|\left\{v,v'\right\}|=2$, which contradicts to that $Q$ is an odd-parity cover. Conversely, assume that $v\notin Q$, $\deg_{G}(v)\equiv 0\Mod{2}$ and $v'\notin Q$, then we have $|N_{G}[v']\cap Q|=0$, which is also a contradiction. Thus, if we take $V_{1}=Q\cap V(G)=\varphi(Q)$, then $Q=\phi(V_{1})$. Therefore $\phi$ is onto and $|\Gamma_{o}|\leq|\Gamma_{e}|$. This completes the proof.
	\end{proof}
\end{proposition}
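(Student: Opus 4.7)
The plan is to exhibit an explicit bijection between $\Gamma_o$, the set of odd-parity covers of the odd-extension $\overline{G}$, and $\Gamma_e$, the set of vertex subsets of $G$ inducing co-even subgraphs. The forward map $\varphi : \Gamma_o \to \Gamma_e$ defined by $\varphi(Q) = Q \cap V(G)$ is already well-defined by Caro's Theorem~\ref{thm_caro.odd.parity.cover.and.gallai.partition}, so my task is to construct an explicit inverse $\phi$ and verify that $\varphi$ and $\phi$ are mutually inverse.

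For each $V_1 \in \Gamma_e$ with complement $V_2 = V(G) \setminus V_1$, I would define
\[
\phi(V_1) = V_1 \cup \bigl\{\, v' : v \in V_2 \text{ and } \deg_G(v) \equiv 0 \pmod{2} \,\bigr\},
\]
where $v'$ denotes the pendant attached to $v$ in $\overline{G}$. The critical step is to verify that $\phi(V_1)$ is in fact an odd-parity cover, i.e.\ that $\lvert N_{\overline{G}}[u] \cap \phi(V_1) \rvert$ is odd for every $u \in V(\overline{G})$. I would argue by cases. For a pendant vertex $u = v'$, $N_{\overline{G}}[v'] = \{v,v'\}$ and exactly one of $v,v'$ lies in $\phi(V_1)$ by construction. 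For $u \in V(G)$, I would split on whether $u \in V_1$ or $u \in V_2$ and on the parity of $\deg_G(u)$, then express $\lvert N_{\overline{G}}[u] \cap \phi(V_1) \rvert$ as $\lvert N_G[u] \cap V_1 \rvert$ plus a $0/1$ contribution from the pendant $u'$ (which exists precisely when $\deg_G(u)$ is even), and use that $G[V_1]$ and $G[V_2]$ are both even to confirm the total is odd in each of the four sub-cases.

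Once $\phi$ is shown to land in $\Gamma_o$, the identity $\varphi \circ \phi = \mathrm{id}_{\Gamma_e}$ is immediate from the definition. For the reverse identity $\phi \circ \varphi = \mathrm{id}_{\Gamma_o}$, given $Q \in \Gamma_o$ and setting $V_1 = Q \cap V(G)$, I would show that for every pendant $v' \in V(\overline{G}) \setminus V(G)$ one has $v' \in Q$ if and only if $v \notin V_1$. This follows at once from the odd-parity condition at $v'$: because $N_{\overline{G}}[v'] = \{v,v'\}$, the set $\{v,v'\} \cap Q$ must be of odd size, so exactly one of $v,v'$ belongs to $Q$. Hence the pendant coordinates of $Q$ are forced by $V_1$, giving $Q = \phi(V_1)$, and the two maps are mutually inverse.

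The main obstacle is the case analysis verifying $\phi(V_1) \in \Gamma_o$: one has to correctly interleave, at each $u \in V(G)$, its side in the partition, the parity of $\deg_G(u)$, and whether $u'$ exists and was placed in $\phi(V_1)$. This is precisely the bookkeeping that the definition of the odd-extension is engineered to make work; once it is carried out, the bijection follows and $\lvert \Gamma_o \rvert = \lvert \Gamma_e \rvert$ is immediate.
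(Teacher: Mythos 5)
Your proposal is correct and follows essentially the same route as the paper: the same map $\phi(V_1)=V_1\cup\{v' : v\in V(G)\setminus V_1,\ \deg_G(v)\equiv 0\Mod{2}\}$, the same verification that the pendant coordinates of any odd-parity cover $Q$ are forced by $Q\cap V(G)$, and the same conclusion that $\varphi$ and $\phi$ are mutually inverse. If anything, you are slightly more careful than the paper, which asserts $\phi:\Gamma_e\to\Gamma_o$ without spelling out the case analysis showing $\phi(V_1)$ really is an odd-parity cover; your sketched four-case check (using that both $G[V_1]$ and $G[V_2]$ are even) is exactly the right bookkeeping and does go through.
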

\begin{remark}
	In fact, both Theorem \ref{thm_caro.odd.parity.cover.and.gallai.partition} and Proposition \ref{prop_the.number.of.gallai.partition.equals.odd.parity.cover.in.odd.extension} can be generalized to all digraphs, while we do not present these results since Lemma \ref{lm_non.existence.co.even.induced.subgraphs.paley.directed} indicates that the Paley tournament $PT_{q}$ does not have an odd-parity cover.
\end{remark}
\subsection{Paley graphs}
We determine the number of co-even induced subgraphs of Paley graphs by proving Theorem \ref{thm_number.of.all.co.even.induced.subgraphs.paley}. The $2$-rank and the Smith group are needed for characterizing the null space of linear equation $(\overline{A}+I)X=\boldsymbol{1}$ over $\mathbb{F}_{2}$, where $\overline{A}$ stands for the adjacency matrix of the odd-extension of $P_{q}$.\par 
For a non-zero square integral matrix $M$ of order $m$, its \emph{Smith normal form} $S(M)$ is a diagonal matrix $S(M)=PMQ=\text{diag}(s_{1},...,s_{m})$, where $P$ and $Q$ are integral with determinant $\pm 1$ and
$s_{1}\mid s_{2}\mid\cdots\mid s_{m}$. The integers $s_{i}$ are called the \emph{elementary divisors} of $M$. The Smith normal form $S(M)$ exists and is uniquely determined up to the signs of the $s_{i}$. Since the integral matrix $M$ represents a $\mathbb{Z}$-homomorphism, we can define the \emph{Smith group} of $M$ to be its cokernel $\mathbb{Z}^{m}/M\mathbb{Z}^{m}$, the computation of which is equivalent to finding $S(M)$. By the fundamental theorem of finitely generated abelian groups, we have group isomorphisms
\begin{equation*}
	\mathbb{Z}^{m}/M\mathbb{Z}^{m}\cong\mathbb{Z}^{m}/S(M)\mathbb{Z}^{m}\cong\left(\mathbb{Z}/s_{1}\mathbb{Z}\right)\oplus\cdots\oplus\left(\mathbb{Z}/s_{\ell}\mathbb{Z}\right)\oplus\mathbb{Z}^{m-\ell}
\end{equation*}
for some $1\leq\ell\leq m$, and it follows that $S(M)=\text{diag}\left\{s_{1},s_{2},...,s_{\ell},\boldsymbol{0}\right\}$.\par 
For any prime $p$, the \emph{$p$-rank} of the matrix $M$, denoted by $\text{rank}_{p}(M)$, is defined to be the rank of $M$ over the finite field of order $p$. It is straightforward to see that $\text{rank}_{p}(M)$ is exactly the number of the elementary divisors $s_{i}$ that are not divisible by $p$. We refer to Chapter 13 of the book \cite{brouwer2012spectra} for more details.
\begin{proof}[Proof of Theorem \ref{thm_number.of.all.co.even.induced.subgraphs.paley}]
	Let $A$ be the adjacency matrix of the Paley graph $P_{q}$. It is shown in \cite[Theorem 2.1]{chandler2015smith} that the Smith group of $A$ is isomorphic to
	\begin{equation*}
		\mathbb{Z}/2\mu\mathbb{Z}\oplus\left(\mathbb{Z}/\mu\mathbb{Z}\right)^{2\mu},
	\end{equation*}
	where $\mu=(q-1)/4$. This yields
	\begin{equation*}
		\text{rank}_{2}(A)=\begin{dcases}
			\frac{q-1}{2} & \text{if $q\equiv 1\Mod{8}$},\\
			q-1 & \text{if $q\equiv 5\Mod{8}$}.
		\end{dcases}
	\end{equation*}
	Since $P_{q}$ has the valency $(q-1)/2$, which is even, the adjacency matrix of the odd-extension $\overline{P_{q}}$ is
	\begin{equation*}
		\overline{A}=\begin{pmatrix}
			A & I_{q}\\
			I_{q} & \boldsymbol{0}_{q}\\
		\end{pmatrix}.
	\end{equation*}
	It follows that the $2$-rank of $\overline{A}+I$ equals
	\begin{equation*}
		\text{rank}_{2}(\overline{A}+I)=\text{rank}_{2}(A)+q=\begin{dcases}
			\frac{3q-1}{2} & \text{if $q\equiv 1\Mod{8}$},\\
			2q-1 & \text{if $q\equiv 5\Mod{8}$},
		\end{dcases}
	\end{equation*}
	and the characteristic vectors of the odd-parity covers of $\overline{P_{q}}$ are given by the solution space of the linear system $(\overline{A}+I)X=\boldsymbol{1}$, which has a special solution whose first $q$ coordinates are all ones and the last $q$ coordinates are all zeros. Thus, as is shown in Proposition \ref{prop_the.number.of.gallai.partition.equals.odd.parity.cover.in.odd.extension}, we have
	\begin{equation*}
		N=|\ker(\overline{A}+I)|=2^{2q-\text{rank}_{2}(\overline{A}+I)}=2^{q-\text{rank}_{2}(A)},
	\end{equation*}
	which completes the proof.
\end{proof}
\begin{remark}
	The result of Theorem \ref{thm_number.of.all.co.even.induced.subgraphs.paley} shows that, if $q\equiv 5\Mod{8}$ then there only exist trivial co-even induced subgraphs of $P_{q}$. On the other hand, if $q\equiv 1\Mod{8}$, then the number of co-even induced subgraphs of $P_{q}$ equals $2^{(q+1)/2}\approx 1.41^{q+1}$. Since $P_{q}$ is quasi-random, we would expect that the number of even induced subgraphs of $P_{q}$ to be that number in the random graphs $G_{q,1/2}$. As indicated by Proposition \ref{prop_expectation.of.number.of.even.induced.subgraphs.undirected}, this expected value equals $\sum_{r=0}^{q}\binom{q}{r}2^{-r+1}=2\cdot 1.5^{q}$. In this sense, it can be seen that the co-even induced subgraphs predominate among all even induced subgraphs in $P_{q}$ when $q\equiv 1\Mod{8}$.
\end{remark}

\section{The number of even induced subgraphs of order $o(q^{1/4})$ in Paley graphs and Paley tournaments}\label{sect_even.induced.subgraphs.paley.small}
Recall that the number of labeled occurrences of any fixed (di)graph $H$ of order $r$ in Paley graph (tournament) is given by the definition of quasi-randomness. If the order $r=r(q)$ is not constant, then Theorem \ref{thm_paley.graph.r.full} shows, in particular, the existence of even induced subgraphs of order as small as $o(\log{q})$. In this section, we give asymptotic estimates for the number of even induced sub(di)graphs of order $r=o(q^{1/4})$ in $P_{q}$ and $PT_{q}$, respectively.
\subsection{Paley graphs}\label{subsect_small.even.induced.subgraphs.paley.graphs}
Before starting the proof of Theorem \ref{thm_number.of.even.induced.subgraphs.paley.undirected}, we state a few lemmas that are needed. Recall that a complex-valued function $f$ on $\Omega$, where $\Omega$ is an open set in $\mathbb{C}$, is said to be \emph{holomorphic} on $\Omega$ if for every $z\in\Omega$ the quotient $\left(f(z+h)-f(z)\right)/h$ converges to a limit when $h\to 0$. It is obvious that every polynomial is holomorphic on $\mathbb{C}$. The following is a classical result in complex analysis, see, e.g., \cite[Corollary 4.3]{stein2003complex}, which is also known as the \emph{saddle point bound} in the combinatorial context \cite[Chapter VIII]{flajolet2009analytic}.
\begin{lemma}\label{lemma_cauchy.inequalities}
	Let $f$ be a holomorphic function on an open set in $\mathbb{C}$ that contains the closure of a disc $D$ centered at $z_{0}$ and of radius $R$. Then
	\begin{equation*}
		|f^{(n)}(z_{0})|\leq\frac{n!\sup_{z\in\partial D}|f(z)|}{R^{n}}.
	\end{equation*}
\end{lemma}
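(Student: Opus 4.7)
The plan is to prove the Cauchy derivative inequalities via Cauchy's integral formula for higher derivatives, followed by a direct $\mathrm{ML}$-style bound on the resulting contour integral. The hypothesis that $f$ is holomorphic on an open set containing $\overline{D}$ ensures that $\partial D$ is a valid positively oriented contour on which $f$ is continuous and inside which $f$ is holomorphic, which is exactly the regularity needed to invoke the integral representation.

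First I would recall (or quickly derive from Cauchy's formula by induction on $n$, differentiating under the integral sign, which is justified since the integrand $f(z)/(z-w)^{n+1}$ is uniformly continuous in $w$ on any compact subset of the open disc) that for every nonnegative integer $n$,
\begin{equation*}
f^{(n)}(z_{0}) \;=\; \frac{n!}{2\pi i}\oint_{\partial D}\frac{f(z)}{(z-z_{0})^{n+1}}\,dz.
\end{equation*}
Next I would parametrize $\partial D$ by $z(\theta)=z_{0}+R\mathrm{e}^{i\theta}$ for $\theta\in[0,2\pi)$, so that $dz = iR\mathrm{e}^{i\theta}\,d\theta$ and $(z-z_{0})^{n+1} = R^{n+1}\mathrm{e}^{i(n+1)\theta}$. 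Substituting gives
\begin{equation*}
f^{(n)}(z_{0}) \;=\; \frac{n!}{2\pi R^{n}}\int_{0}^{2\pi} f(z_{0}+R\mathrm{e}^{i\theta})\,\mathrm{e}^{-in\theta}\,d\theta.
\end{equation*}

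Finally, taking absolute values and using the triangle inequality in its integral form, together with $|\mathrm{e}^{-in\theta}|=1$, yields
\begin{equation*}
|f^{(n)}(z_{0})| \;\leq\; \frac{n!}{2\pi R^{n}}\int_{0}^{2\pi}|f(z_{0}+R\mathrm{e}^{i\theta})|\,d\theta \;\leq\; \frac{n!}{2\pi R^{n}}\cdot 2\pi\sup_{z\in\partial D}|f(z)| \;=\; \frac{n!\sup_{z\in\partial D}|f(z)|}{R^{n}},
\end{equation*}
which is the desired inequality.

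There is essentially no serious obstacle here; the only point that demands care is the justification of the integral formula for derivatives, since the statement of the lemma does not itself presuppose it. If one prefers to avoid an induction-plus-differentiation-under-the-integral argument, a clean alternative is to invoke the Taylor expansion $f(z)=\sum_{k\geq 0}a_{k}(z-z_{0})^{k}$, valid on $\overline{D}$ because $f$ is holomorphic on an open neighborhood of $\overline{D}$, and to identify the coefficient $a_{n}=f^{(n)}(z_{0})/n!$ with the $n$-th Fourier coefficient $\tfrac{1}{2\pi R^{n}}\int_{0}^{2\pi}f(z_{0}+R\mathrm{e}^{i\theta})\mathrm{e}^{-in\theta}\,d\theta$; the bound then follows by the same trivial $\mathrm{ML}$-estimate. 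Either route gives the result with a sentence or two of justification.
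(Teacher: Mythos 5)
Your proof is correct and is exactly the standard argument (Cauchy's integral formula for derivatives plus the ML-estimate on the circle $\partial D$); the paper does not prove this lemma itself but cites it as a classical result from Stein--Shakarchi, where the proof given is the same one you wrote.
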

Another ingredient in our proof is the character sum with polynomial arguments, also known as the \emph{Weil sum} over the finite field. In particular, we need the following version of Weil's theorem for estimating the value of the character sums.
\begin{lemma}[{\cite[Theorem 5.41]{lidl1996finite}}]
	Let $\psi$ be a non-trivial multiplicative character of $\mathbb{F}_{q}$ and let $m>1$ be its order. Let $f\in\mathbb{F}_{q}[x]$ be a monic polynomial of positive degree that is not an $m$-th power of a polynomial. Let $d$ be the number of distinct roots of $f$ in its splitting field over $\mathbb{F}_{q}$. Then
	\begin{equation}\label{eq_weil.bound}
		\left|\sum_{c\in\mathbb{F}_{q}}\psi\left(f(c)\right)\right|\leq(d-1)\sqrt{q}.
	\end{equation}
\end{lemma}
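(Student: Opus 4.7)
The plan is to deduce the bound from the Hasse--Weil theorem for smooth projective curves over finite fields, following the classical reduction of Weil character sums to point counts on superelliptic curves. First, for each $c\in\mathbb{F}_{q}$, observe that the number of $y\in\mathbb{F}_{q}$ satisfying $y^{m}=f(c)$ equals $\sum_{j=0}^{m-1}\psi^{j}(f(c))$ when $f(c)\neq 0$ (using the convention $\psi^{0}\equiv 1$ and extending $\psi^{j}(0):=0$ for $j\geq 1$), and equals $1$ when $f(c)=0$. Summing over $c$ gives
\begin{equation*}
	N:=\#\bigl\{(x,y)\in\mathbb{F}_{q}^{2}:y^{m}=f(x)\bigr\}=q+\sum_{j=1}^{m-1}\sum_{c\in\mathbb{F}_{q}}\psi^{j}(f(c))+E,
\end{equation*}
where $E$ is a correction coming from the $\deg(f)$-bounded number of zeros of $f$ in $\mathbb{F}_{q}$.

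Next, I would consider the affine plane curve $C:y^{m}=f(x)$ and its smooth projective model $\tilde{C}$ over $\mathbb{F}_{q}$. The assumption that $f$ is not an $m$-th power of a polynomial is exactly what guarantees that $C$ is geometrically irreducible, so $\tilde{C}$ is a geometrically integral smooth projective curve. The coordinate projection $x:\tilde{C}\to\mathbb{P}^{1}$ has degree $m$, with ramification concentrated above the $d$ distinct roots of $f$ in the algebraic closure (and possibly above $\infty$ depending on $\deg(f)\bmod m$). Applying the Riemann--Hurwitz formula to this covering yields a genus bound of the shape $g(\tilde{C})\leq\tfrac{(m-1)(d-1)}{2}$, after which the Hasse--Weil theorem furnishes
\begin{equation*}
	\bigl|\#\tilde{C}(\mathbb{F}_{q})-(q+1)\bigr|\leq 2g(\tilde{C})\sqrt{q}\leq(m-1)(d-1)\sqrt{q}.
\end{equation*}
Combining this with the expression for $N$ bounds the aggregate of character sums $\sum_{j=1}^{m-1}\sum_{c}\psi^{j}(f(c))$.

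Finally, to isolate the single sum $\sum_{c}\psi(f(c))$, I would either apply the same argument to the intermediate covers $y^{m/m'}=f(x)$ for proper divisors $m'\mid m$ (inductively subtracting contributions of subcharacters $\psi^{m/m'}$), or, more cleanly, exploit the fact that each Frobenius eigenvalue on $H^{1}(\tilde{C})$ has absolute value $\sqrt{q}$ and that the $L$-function factors according to the characters of the cyclic Galois group of $\tilde{C}\to\mathbb{P}^{1}$, giving the per-character bound $(d-1)\sqrt{q}$. The main obstacle is the careful bookkeeping at ramification: tracking the behavior at $\infty$, at the zeros of $f$ (which simultaneously affect the affine point count and contribute to $E$), and checking that the Riemann--Hurwitz computation gives the sharp constant $d-1$ rather than $(m-1)(d-1)$. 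An alternative avenue that sidesteps algebraic geometry is the elementary Stepanov--Bombieri method, which constructs an auxiliary polynomial of controlled degree vanishing to high order at points $(c,\psi(f(c))^{1/m})$ and derives the bound from the tension between its degree and its zero count; this approach again crucially relies on $f$ not being an $m$-th power so that the constructed polynomial does not vanish identically.
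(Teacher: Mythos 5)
The paper does not prove this lemma at all: it is imported verbatim as Theorem 5.41 of Lidl and Niederreiter's book and used as a black box, so there is no internal proof to compare your argument against. What you have written is an outline of the classical deduction of the Weil character-sum bound from the Riemann hypothesis for curves over finite fields, which is essentially the route taken in the cited source (via $L$-functions of multiplicative characters) and in Weil's original work; as an outline it identifies the right objects, but it does not close into a proof.

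Two concrete problems. First, the claim that ``$f$ is not an $m$-th power'' is \emph{exactly} what guarantees geometric irreducibility of $y^{m}=f(x)$ is false: take $m=4$ and $f=g^{2}$ with $g$ squarefree; then $f$ is not a fourth power, yet $y^{4}-g^{2}=(y^{2}-g)(y^{2}+g)$ is reducible. The hypothesis of the lemma is the right one for a \emph{per-character} statement --- it ensures that $\psi\circ f$ is a non-trivial character of the relevant Kummer extension --- not for irreducibility of the full superelliptic curve. Second, the Hasse--Weil bound on $\#\tilde{C}(\mathbb{F}_{q})$ controls only the aggregate $\sum_{j=1}^{m-1}\sum_{c}\psi^{j}(f(c))$, from which the single term $j=1$ cannot be isolated by the triangle inequality, and the proposed inductive subtraction over intermediate covers does not visibly terminate in the bound $(d-1)\sqrt{q}$ for each individual character. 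Your alternative --- factoring $Z(\tilde{C},t)/Z(\mathbb{P}^{1},t)$ as $\prod_{j=1}^{m-1}L(\psi^{j},t)$ and bounding each factor separately --- is the correct fix, but once you take that route the curve, the Riemann--Hurwitz computation and the point count become superfluous: the argument collapses to ``the $L$-polynomial of $\psi\circ f$ has degree $d-1$ and its inverse roots have modulus $\sqrt{q}$,'' which is precisely the content of the theorem being quoted. In either form the essential difficulty (the Riemann hypothesis for function fields, or equivalently the Stepanov--Bombieri construction) remains a black box in your sketch, so the proposal is a reduction to an equally deep input rather than a proof.
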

\begin{lemma}\label{lemma_saddle.point.estimate}
	Let $0\leq k\leq r\leq q$, where $r=\omega(1)$ and $r=o(\sqrt{q})$. For each $W\subseteq\mathbb{F}_{q}$ with cardinality $|W|=r-k$, let $f_{W}(x)=\prod_{w\in W}(x-w)\in\mathbb{F}_{q}[x]$ and define $f_{W}(x)=1$ if $W=\emptyset$. Let
	\begin{equation*}
		A_{k}(W)=\sum_{U\subseteq\mathbb{F}_{q}\backslash W\atop|U|=k}\prod_{u\in U}\eta\left(f_{W}(u)\right),
	\end{equation*}
	where $\eta$ is the quadratic character defined by $\eta(x)=x^{(q-1)/2}$ for all $x\in\mathbb{F}_{q}$, and the subscript $k$ denotes $r-|W|$. Then $A_{r}(\emptyset)=\binom{q}{r}$ and $A_{0}(W)=1$ whenever $|W|=r$. If $1\leq k\leq r-1$, then there exists a constant $C>0$ such that
	\begin{equation*}
		|A_{k}(W)|\leq\binom{q}{k}\left(\frac{C(r-k+\sqrt{k})}{\sqrt{q}}\right)^{k}.
	\end{equation*}
	\begin{proof}
		For integers $k$ satisfying $1\leq k\leq r-1$, consider the generating function
		\begin{equation*}
			F(\texttt{t}):=\prod_{u\in\mathbb{F}_{q}\backslash W}\left(1+\eta\left(f_{W}(u)\right)\texttt{t}\right),
		\end{equation*}
		where $W\subseteq\mathbb{F}_{q}$ and $|W|=r-k$, then $A_{k}(W)$ is exactly the coefficient of $\texttt{t}^{k}$ in $F(\texttt{t})$. For $t\in\mathbb{C}^*$ with $|t|<1$, we compute
		\begin{equation}\label{eq_log.modulus.of.F(t)}
			\begin{split}
				\log{|F(t)|}=&\sum_{u\in\mathbb{F}_{q}\backslash W}\log{\left|1+\eta\left(f_{W}(u)\right)t\right|}\\
				=&\Re{\left(\sum_{u\in\mathbb{F}_{q}\backslash W}\log{\left(1+\eta\left(f_{W}(u)\right)t\right)}\right)}\\
				=&\Re{\left(\sum_{u\in\mathbb{F}_{q}\backslash W}\sum_{n=1}^{\infty}(-1)^{n+1}\frac{\eta\left(f_{W}(u)\right)^{n}}{n}t^{n}\right)}\\
				=&\Re{\left(\sum_{n=1}^{\infty}\frac{(-1)^{n+1}}{n}\left(\sum_{u\in\mathbb{F}_{q}\backslash W}\eta\left(f_{W}(u)^{n}\right)\right)t^{n}\right)}.
			\end{split}
		\end{equation}\par 
		If $n$ is even, then we simply have $\sum_{u\in\mathbb{F}_{q}\backslash W}\eta\left(f_{W}(u)^{n}\right)=q-(r-k)$. This implies
		\begin{equation}\label{eq_re.even.terms.character.sum}
			\begin{split}
				\Re{\left(\sum_{n\geq 1\atop\text{$n$ is even}}\frac{(-1)^{n+1}}{n}\left(\sum_{u\in\mathbb{F}_{q}\backslash W}\eta\left(f_{W}(u)^{n}\right)\right)t^{n}\right)}=&\frac{q-(r-k)}{2}\Re{\left(\log{(1-t^{2})}\right)}\\
				=&\frac{q-(r-k)}{2}\log{|1-t^{2}|}\\
				\leq&\frac{q-(r-k)}{2}|t|^{2}.
			\end{split}
		\end{equation}
		If $n$ is odd, then the Weil's bound \eqref{eq_weil.bound} yields
		\begin{equation*}
			\left|\sum_{u\in\mathbb{F}_{q}\backslash W}\eta\left(f_{W}(u)^{n}\right)\right|=\left|\sum_{u\in\mathbb{F}_{q}}\eta\left(f_{W}(u)^{n}\right)\right|\leq(r-k-1)\sqrt{q}<(r-k)\sqrt{q}
		\end{equation*}
		which implies
		\begin{equation}\label{ineq_odd.terms.character.sum}
			\begin{split}
				\Re{\left(\sum_{n\geq 1\atop\text{$n$ is odd}}\frac{(-1)^{n+1}}{n}\left(\sum_{u\in\mathbb{F}_{q}\backslash W}\eta\left(f_{W}(u)^{n}\right)\right)t^{n}\right)}\leq&\sum_{n\geq 1\atop\text{$n$ is odd}}\left|\sum_{u\in\mathbb{F}_{q}\backslash W}\eta\left(f_{W}(u)^{n}\right)\right|\frac{(-1)^{n+1}}{n}|t|^{n}\\
				\leq&(r-k)\sqrt{q}\sum_{n\geq 1\atop\text{$n$ is odd}}\frac{|t|^{n}}{n}\\
				=&\frac{(r-k)\sqrt{q}}{2}\log{\left(1+\frac{2|t|}{1-|t|}\right)}\\
				\leq&(r-k)\sqrt{q}\frac{|t|}{1-|t|}.
			\end{split}
		\end{equation}
		Given that $|t|<1$, plugging \eqref{eq_re.even.terms.character.sum} and \eqref{ineq_odd.terms.character.sum} into \eqref{eq_log.modulus.of.F(t)} yields
		\begin{equation*}
			\log{|F(t)|}\leq(r-k)\sqrt{q}\frac{|t|}{1-|t|}+\frac{1}{2}(q-(r-k))|t|^{2}.
		\end{equation*}	
		Thus, by Lemma \ref{lemma_cauchy.inequalities}
		\begin{equation}\label{ineq_Ak(W).Cauchy.inequality}
			\begin{split}
				|A_{k}(W)|=\left|\frac{F^{(k)}(0)}{k!}\right|\leq&\frac{\sup_{|t|=\rho}|F(t)|}{\rho^{k}}\\
				\leq&\exp{\left\{\left(r-k\right)\sqrt{q}\frac{\rho}{1-\rho}+\frac{1}{2}\left(q-(r-k)\right)\rho^{2}-k\log{\rho}\right\}}
			\end{split}
		\end{equation}
		for all $0<\rho<1$. Let $0<\varepsilon<1$ be a constant. Then for all $0<\rho<\varepsilon$, there exists $\delta>0$ such that $1/(1-\rho)<\delta$. If we consider the function
		\begin{equation*}
			g(\rho):=A\sqrt{q}\rho+\frac{1}{2}B\rho^{2}-k\log{\rho},
		\end{equation*}
		where $A=\delta(r-k)$ and $B=q-(r-k)$, then \eqref{ineq_Ak(W).Cauchy.inequality} yields
		\begin{equation}\label{ineq_Ak(W).upper.bound.g(rho)}
			|A_{k}(W)|\leq\exp{g(\rho)}
		\end{equation}
		for all $0<\rho<\varepsilon$.\par 
		We establish the upper bound for $|A_{k}(W)|$ through optimizing the value of $g(\rho)$. Let $\rho^*$ denote the positive solution to
		\begin{equation*}
			\rho\frac{\text{d}}{\text{d}\rho}g(\rho)=\rho\frac{\text{d}}{\text{d}\rho}\left(A\sqrt{q}\rho+\frac{1}{2}B\rho^{2}-k\log{\rho}\right)=A\sqrt{q}\rho+B\rho^{2}-k=0,
		\end{equation*}
		in other words
		\begin{equation}\label{eq_minimum.point.expression}
			\rho^*=\frac{\sqrt{A^{2}q+4kB}-A\sqrt{q}}{2B}=\frac{2k}{\sqrt{A^{2}q+4kB}+A\sqrt{q}}.
		\end{equation}
		Then it is straightforward to see that $\rho^*$ is the minimum point of $g(\rho)$ on $(0,\varepsilon)$ if $q$ is sufficiently large, given that $r=o(\sqrt{q})$. This implies
		\begin{equation*}
			\begin{split}
				\min_{\rho\in(0,\varepsilon)}g(\rho)=g(\rho^*)=&A\sqrt{q}\rho^*+\frac{1}{2}B{\rho^*}^{2}-k\log{\rho^*}\\
				=&-\frac{1}{2}B{\rho^*}^{2}+k-k\log{\rho^*}\\
				\leq&k\log{\frac{q}{k}}+k+k\log{\frac{k}{\rho^*q}}.
			\end{split}
		\end{equation*}
		Note that by \eqref{eq_minimum.point.expression} we have
		\begin{equation*}
			\frac{k}{\rho^*q}=\frac{\sqrt{A^{2}q+4kB}+A\sqrt{q}}{2q}\leq\frac{\sqrt{A^{2}+4k}+A}{2\sqrt{q}}=\frac{2k}{\sqrt{q}\left(\sqrt{A^{2}+4k}-A\right)},
		\end{equation*}
		which means $k\log{(k/(\rho^*q))}\leq k\log{(k/\sqrt{q})}+k\log{2}-k\log{\left(\sqrt{A^{2}+4k}-A\right)}$. It is routine to check that
		\begin{equation*}
			\sqrt{A^{2}+4k}-A\geq\frac{2k}{A+\sqrt{k}}.
		\end{equation*}
		It follows that
		\begin{equation}\label{ineq_g(rho*).extraction}
			\begin{split}
				g(\rho^*)\leq& k\log{\frac{q}{k}}+k+k\log\frac{k}{\sqrt{q}}+k\log{2}-k\log{\left(\sqrt{A^{2}+4k}-A\right)}\\
				\leq&k\log{\frac{q}{k}}+k+k\log\frac{k}{\sqrt{q}}+k\log(\frac{A+\sqrt{k}}{k})\\
				=&k\log{\frac{q}{k}}+k\log\frac{\delta(r-k)+\sqrt{k}}{\sqrt{q}}+k.
			\end{split}
		\end{equation}
		Therefore, by \eqref{ineq_Ak(W).upper.bound.g(rho)} and \eqref{ineq_g(rho*).extraction}, there exists a positive constant $C=C(\delta)$ such that
		\begin{equation*}
			|A_{k}(W)|\leq\mathrm{e}^{k\log{\frac{q}{k}}}\left(\frac{C(r-k+\sqrt{k})}{\sqrt{q}}\right)^{k}\leq\binom{q}{k}\left(\frac{C(r-k+\sqrt{k})}{\sqrt{q}}\right)^{k}.
		\end{equation*}
		This completes the proof.
	\end{proof}
\end{lemma}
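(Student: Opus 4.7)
The plan is to express $A_k(W)$ as a single Cauchy coefficient. Setting
\begin{equation*}
F(t) \;=\; \prod_{u \in \mathbb{F}_{q}\setminus W}\bigl(1+\eta(f_{W}(u))\,t\bigr),
\end{equation*}
one has $A_k(W) = [t^k]F(t)$, so by Lemma \ref{lemma_cauchy.inequalities}, $|A_k(W)| \leq \rho^{-k}\sup_{|t|=\rho}|F(t)|$ for every $0 < \rho < 1$. The cases $k = 0$ and $k = r$ are immediate from the definition (empty product and $\eta(1) = 1$), so the real task is to bound $\log|F(t)|$ uniformly on a small circle and then optimize $\rho$.

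To control $\log|F(t)|$ I would take the real part of the Mercator expansion of $\log(1+z)$ and interchange summations:
\begin{equation*}
\log|F(t)| \;=\; \mathrm{Re}\sum_{n=1}^{\infty}\frac{(-1)^{n+1}}{n}\,t^{n}\sum_{u\in\mathbb{F}_{q}\setminus W}\eta(f_{W}(u))^{n}.
\end{equation*}
The inner character sum splits by the parity of $n$. For even $n$ it equals $q-(r-k)$ since $\eta(f_W(u))^2 = 1$ for $u \notin W$, contributing at most $\tfrac{1}{2}(q-(r-k))|t|^{2}$ after summing (using $\log|1-t^2|\leq |t|^2$). For odd $n$ the sum over all of $\mathbb{F}_q$ is a Weil sum for $f_W(x)^n$, an odd power of a polynomial with $r-k$ simple roots, hence not a perfect square; Weil's bound then gives $(r-k-1)\sqrt{q}$, and summing the resulting odd-index series yields at most $(r-k)\sqrt{q}\cdot|t|/(1-|t|)$. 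Combining the two contributions gives, on $|t|=\rho$,
\begin{equation*}
|A_k(W)| \;\leq\; \exp\Bigl\{(r-k)\sqrt{q}\,\tfrac{\rho}{1-\rho}+\tfrac{1}{2}(q-(r-k))\rho^{2}-k\log\rho\Bigr\}.
\end{equation*}

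The last step is optimizing the exponent, and this is where I expect the main bookkeeping. Restricting $\rho$ to $(0,\varepsilon)$ for a small constant $\varepsilon$ absorbs $1/(1-\rho)$ into a constant $\delta$; writing $A=\delta(r-k)$ and $B=q-(r-k)$, the saddle point $\rho^{*}$ of $g(\rho) = A\sqrt{q}\rho + \tfrac{1}{2}B\rho^{2} - k\log\rho$ satisfies $B(\rho^{*})^{2} + A\sqrt{q}\rho^{*} = k$ and equals $2k/(A\sqrt{q}+\sqrt{A^{2}q+4kB})$, with $\rho^{*} < \varepsilon$ for large $q$ thanks to $r = o(\sqrt{q})$. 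The delicate step is repackaging $g(\rho^{*})$ into the advertised shape: the saddle-point relation reduces the polynomial terms to $k - k\log\rho^{*}$, and splitting $\log(1/\rho^{*}) = \log(q/k) + \log(k/(\rho^{*}q))$ peels off the $\binom{q}{k}$ factor via $e^{k\log(q/k)} \leq \binom{q}{k}$. A short calculation using the elementary inequality $\sqrt{A^{2}+4k}-A \geq 2k/(A+\sqrt{k})$ then upgrades $\log(k/(\rho^{*}q))$ to $\log((r-k+\sqrt{k})/\sqrt{q}) + O(1)$, producing the claimed bound with an absolute constant $C$.
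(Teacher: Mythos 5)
Your proposal is correct and follows essentially the same route as the paper's proof: the same generating function $F(t)$, the same parity split of the Mercator series with Weil's bound on the odd terms, the same Cauchy/saddle-point bound, and the same optimization of $g(\rho)=A\sqrt{q}\rho+\tfrac{1}{2}B\rho^{2}-k\log\rho$ finished off with the inequality $\sqrt{A^{2}+4k}-A\geq 2k/(A+\sqrt{k})$. No substantive differences to report.
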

\begin{proof}[Proof of Theorem \ref{thm_number.of.even.induced.subgraphs.paley.undirected}]
	For every subset $S\subseteq\mathbb{F}_{q}$ with cardinality $r$, we define the indicator variable
	\begin{equation*}
		\mathds{1}_{S}=\prod_{v\in S}\frac{1+(-1)^{\deg_{P_{q}[S]}(v)}}{2},
	\end{equation*}
	then $S$ induces an even subgraph if $\mathds{1}_{S}=1$, otherwise $\mathds{1}_{S}=0$. Therefore
	\begin{equation*}
		\begin{split}
			N_{r}=&\sum_{S\subseteq\mathbb{F}_{q}\atop |S|=r}\mathds{1}_{S}=\sum_{S\subseteq\mathbb{F}_{q}\atop |S|=r}2^{-r}\sum_{T\subseteq S}\prod_{v\in T}(-1)^{\deg_{P_{q}[S]}(v)}\\
			=&\sum_{S\subseteq\mathbb{F}_{q}\atop |S|=r}2^{-r}\sum_{T\subseteq S}(-1)^{2e(T)+e(T,S\backslash T)}=\sum_{S\subseteq\mathbb{F}_{q}\atop |S|=r}2^{-r}\sum_{T\subseteq S}(-1)^{e(T,S\backslash T)},
		\end{split}
	\end{equation*}
	where we denote by $e(U,W)$ the cardinality of the set of edges $\left\{(u,w)\mid u\in U,w\in W,(u,w)\in E(P_{q})\right\}$ and write $e(U)=e(U,U)$. Observe that for all $T\subseteq S\subseteq\mathbb{F}_{q}$, we have
	\begin{equation*}
		(-1)^{e(T,S\backslash T)}=(-1)^{|T|(|S|-|T|)}\prod_{u\in T}\prod_{w\in S\backslash T}\eta(u-w),
	\end{equation*}
	which yields
	\begin{equation*}
		\begin{split}
			N_{r}=&\sum_{S\subseteq\mathbb{F}_{q}\atop |S|=r}2^{-r}\sum_{T\subseteq S}(-1)^{|T|(|S|-|T|)}\prod_{u\in T}\prod_{w\in S\backslash T}\eta(u-w)\\
			=&\sum_{S\subseteq\mathbb{F}_{q}\atop |S|=r}2^{-r}\sum_{k=0}^{r}(-1)^{k(r-k)}\sum_{T\subseteq S\atop |T|=k}\prod_{u\in T}\eta\left(\prod_{w\in S\backslash T}(u-w)\right)\\
			=&2^{-r}\sum_{k=0}^{r}(-1)^{k(r-k)}\sum_{U,W\subseteq\mathbb{F}_{q}, U\cap W=\emptyset\atop |U|=k, |W|=r-k}\prod_{u\in U}\eta\left(\prod_{w\in W}(u-w)\right)\\
			=&2^{-r}\sum_{k=0}^{r}(-1)^{k(r-k)}\sum_{W\subseteq\mathbb{F}_{q}\atop|W|=r-k}\sum_{U\subseteq\mathbb{F}_{q}\backslash W\atop|U|=k}\prod_{u\in U}\eta\left(\prod_{w\in W}(u-w)\right).
		\end{split}
	\end{equation*}
	Therefore,
	\begin{equation*}
		\begin{split}
			N_{r}=&2^{-r}\sum_{k=0}^{r}(-1)^{k(r-k)}\sum_{W\subseteq\mathbb{F}_{q}\atop|W|=r-k}A_{k}(W)\\
			=&2^{1-r}\binom{q}{r}+2^{-r}\sum_{k=1}^{r-1}(-1)^{k(r-k)}\sum_{W\subseteq\mathbb{F}_{q}\atop|W|=r-k}A_{k}(W).
		\end{split}
	\end{equation*}
	Hence, by Lemma \ref{lemma_saddle.point.estimate}, we have
	\begin{equation*}
		\begin{split}
			\left|N_{r}-2^{1-r}\binom{q}{r}\right|=&2^{-r}\left|\sum_{k=1}^{r-1}(-1)^{k(r-k)}\sum_{W\subseteq S\atop|W|=r-k}A_{k}(W)\right|\\
			\leq&2^{-r}\sum_{k=1}^{r-1}\binom{q}{r-k}|A_{k}(W)|\\
			\leq&2^{-r}\sum_{k=1}^{r-1}\binom{q}{r-k}\binom{q}{k}\left(\frac{C(r-k+\sqrt{k})}{\sqrt{q}}\right)^{k}.
		\end{split}
	\end{equation*}
	Notice that since $r=o(\sqrt{q})$, by Lemma \ref{lemma_binomial.coefficients.sqrt.size} we have $\binom{q}{r-k}\binom{q}{k}/\binom{q}{r}=\binom{r}{k}\binom{q}{r-k}/\binom{q-k}{r-k}=\left(1+o(1)\right)\binom{r}{k}$. This yields
	\begin{equation*}
		\sum_{k=1}^{r-1}\frac{\binom{q}{r-k}\binom{q}{k}}{\binom{q}{r}}\left(\frac{C(r-k+\sqrt{k})}{\sqrt{q}}\right)^{k}=\left(1+o(1)\right)\sum_{k=1}^{r-1}\binom{r}{k}\left(\frac{C(r-k+\sqrt{k})}{\sqrt{q}}\right)^{k}.
	\end{equation*}
	Let $\zeta=Cr/\sqrt{q}$. Then we have $\zeta\geq C(r-k+\sqrt{k})/\sqrt{q}$ and $\zeta=o(1)$. It follows that
	\begin{equation*}
		\sum_{k=1}^{r-1}\binom{r}{k}\zeta^{r}=(1+\zeta)^{r}-1-\zeta^{r},
	\end{equation*}
	where $(1+\zeta)^{r}=\exp{\left\{r\log(1+\zeta)\right\}}=\exp{\left\{r(\zeta+O(\zeta^{2}))\right\}}=1+O(r\zeta)$, given that $r\zeta=O(r^{2}/\sqrt{q})=o(1)$. Hence, we have
	\begin{equation*}
		\sum_{k=1}^{r-1}\binom{q}{r-k}\binom{q}{k}\zeta^{k}=\binom{q}{r}O(r\zeta)=\binom{q}{r}O\left(\frac{r^{2}}{\sqrt{q}}\right).
	\end{equation*}
	This finishes the proof.
\end{proof}
\subsection{Paley tournaments}
\begin{proof}[Proof of Theorem \ref{thm_number.of.even.induced.subgraphs.paley.directed}]
	Analogously to the case of the Paley graphs, for every subset $S\subseteq\mathbb{F}_{q}$ with cardinality $r$, we define the indicator variable
	\begin{equation*}
		\tilde{\mathds{1}}_{S}=\prod_{v\in S}\frac{1+(-1)^{\deg_{PT_{q}[S]}^{-}(v)}}{2},
	\end{equation*}
	then $S$ induces an even subdigraph of $PT_{q}$ if $\tilde{\mathds{1}}_{S}=1$, otherwise $\tilde{\mathds{1}}_{S}=0$. Therefore, if we denote by $e^{-}(U,W)$ the number of all directed edges from $U$ to $W$, we have
	\begin{equation}\label{eq_Nr.quadratic.characters.sum.directed}
		\begin{split}
			\tilde{N}_{r}=&\sum_{S\subseteq\mathbb{F}_{q}\atop|S|=r}\tilde{\mathds{1}}_{S}=\sum_{S\subseteq\mathbb{F}_{q}\atop|S|=r}2^{-r}\sum_{T\subseteq S}\prod_{v\in T}(-1)^{\deg_{PT_{q}[S]}^{-}(v)}\\
			=&\sum_{S\subseteq\mathbb{F}_{q}\atop|S|=r}2^{-r}\sum_{T\subseteq S}(-1)^{e^{-}(T,T)+e^{-}(T,S\backslash T)}\\
			=&2^{-r}\sum_{k=0}^{r}\sum_{U\subseteq\mathbb{F}_{q}\atop|U|=k}(-1)^{e^{-}(U,U)}\sum_{W\subseteq\mathbb{F}_{q}\backslash U\atop|W|=r-k}(-1)^{e^{-}(U,W)}\\
			=&2^{-r}\sum_{k=0}^{r}(-1)^{k(r-k)}\sum_{U\subseteq\mathbb{F}_{q}\atop|U|=k}(-1)^{e^{-}(U,U)}\sum_{W\subseteq\mathbb{F}_{q}\backslash U\atop|W|=r-k}\prod_{w\in W}\eta\left(\prod_{u\in U}(u-w)\right),
		\end{split}
	\end{equation}
	in which the last equation holds by the observation $(-1)^{e^{-}(U,W)}=(-1)^{|U||W|}\prod_{u\in U}\prod_{w\in W}\eta(u-w)$ for all $U\cap W=\emptyset$. Observe that for each $U\subseteq\mathbb{F}_{q}$ with $|U|=k$, we have
	\begin{equation*}
		(-1)^{e^{-}(U,U)}=(-1)^{k(k-1)}\prod_{u\in U}\prod_{u'\in U,u'\neq u}\eta(u-u')=\prod_{\left\{u,u'\right\}\subseteq U}\eta(u-u')\eta(u'-u)=(-1)^{\binom{k}{2}},
	\end{equation*}
	which yields for each $0\leq k\leq r$,
	\begin{equation*}
		\begin{split}
			\sum_{U\subseteq\mathbb{F}_{q}\atop|U|=k}(-1)^{e^{-}(U,U)}\sum_{W\subseteq\mathbb{F}_{q}\backslash U\atop|W|=r-k}\prod_{w\in W}\eta\left(\prod_{u\in U}(u-w)\right)=&(-1)^{\binom{k}{2}}\sum_{W\subseteq\mathbb{F}_{q}\atop|W|=r-k}\sum_{U\subseteq\mathbb{F}_{q}\backslash W\atop|U|=k}\prod_{u\in U}\eta\left(\prod_{w\in W}(u-w)\right)\\
			=&(-1)^{\binom{k}{2}}\sum_{W\subseteq\mathbb{F}_{q}\atop|W|=r-k}A_{k}(W).
		\end{split}
	\end{equation*}
	Therefore, noticing $(-1)^{\binom{r}{2}}=1$ since $r\equiv 0,1\Mod{4}$, it follows from \eqref{eq_Nr.quadratic.characters.sum.directed} that
	\begin{equation*}
		\left|\tilde{N}_{r}-2^{1-r}\binom{q}{r}\right|\leq2^{-r}\sum_{k=1}^{r-1}\sum_{W\subseteq\mathbb{F}_{q}\atop|W|=r-k}|A_{k}(W)|,
	\end{equation*}
	and the proof is complete.
\end{proof}
We remark that the result of Theorem \ref{thm_number.of.even.induced.subgraphs.paley.directed} accords with Proposition \ref{prop_expectation.of.number.of.even.induced.subgraphs.directed}, since the mean value of $\tilde{N}_{r}$ among all possible values of $r$ equals the expected number $2^{-r}\binom{q}{r}$ for the random digraphs.\par 
Both Theorem \ref{thm_number.of.even.induced.subgraphs.paley.undirected} and Theorem \ref{thm_number.of.even.induced.subgraphs.paley.directed} (for $r\equiv 0,3\Mod{4}$) hold for odd induced sub(di)graphs of order $o(q^{1/4})$ by similar arguments with only minor adaptations of the indicator variables.

\section{Applications to the construction of MDS self-dual codes}\label{sect_applications.to.mds.self.dual}
As is mentioned in $\S$\ref{subsect_motivation.from.coding.theory}, the construction of MDS self-dual codes via (extended) GRS codes is equivalent to enumerating induced subgraphs with the degree parity conditions in $P_{q}$ and $PT_{q}$. In the literature of MDS self-dual codes, there have been several existential statements that essentially coincide with the previous results in graph theory. For instance, it was proved in \cite[Lemma 13]{wan2023newmds} that for any odd prime power $q$, if 
\begin{equation}\label{ineq_paley.clique.full.bound}
	q>\left((r-3)2^{r-3}+\frac{1}{2}+\sqrt{\left((r-3)2^{r-3}+\frac{1}{2}\right)^{2}+(r-1)2^{r-2}}\right)^{2},
\end{equation}
then there exists a subset $\mathcal{S}=\left\{\alpha_{1},\alpha_{2},...,\alpha_{r}\right\}\subseteq\mathbb{F}_{q}$ such that $\eta(\alpha_{j}-\alpha_{i})=1$ for all $1\leq i<j\leq r$. Note that \eqref{ineq_paley.clique.full.bound} is exactly the same as the bound given by \eqref{ineq_paley.graph.r.full.bound}, and this is a special case of Theorem \ref{thm_paley.graph.r.full} when $q\equiv 1\Mod{4}$ since the subset $\mathcal{S}$ here induces a clique in $P_{q}$. This improves the result of \cite[Theorem 3.2(ii)]{jin2017new} proved for $q\geq r^{2}4^{r}$, which recovers the earlier bound given by Bollob{\'a}s.\par 
Meanwhile, a large number of MDS self-dual codes have been explicitly constructed from (extended) GRS codes. Write $q=p^{e}$ with odd prime $p$ and some positive integer $e$. Then we can classify some of the explicit constructions into Types I, II, III and IV in Table \ref{table_mds.self.dual.codes} based on their conditions on $q$.\par 
\begin{table}[!ht]\label{table_mds.self.dual.codes}
	\centering
	\begin{threeparttable}
		\caption{Known explicitly constructed MDS self-dual codes of length $n$ from (extended) GRS codes}
		\begin{tabular}{m{0.7cm}<{\centering}m{3.5cm}<{\centering}m{7cm}<{\centering}m{1.8cm}<{\centering}}
			\toprule
			\textbf{Type} & \textbf{Restrictions on $q$} & \textbf{Length $n$} & \textbf{References}\\
			\midrule
			I-1 & $q$ odd & $(n-2)\mid(q-1)$, $\eta(2-n)=1$ & \cite{yan2019note,fang2019new}\\
			\addlinespace
			I-2 & $q$ odd & $(n-1)\mid(q-1)$, $\eta(1-n)=1$ & \cite{yan2019note}\\
			\addlinespace
			I-3 & $q$ odd & $n=p^{\ell}+1$, $\ell\leq e-1$ & \cite{fang2019new,lebed2022some}\\
			II-1 & $q=p^{e}$, $e=uv$ & $n=2tp^{u\ell}$, $1\leq\ell\leq v-1$, $2t\mid(p^{u}-1)$, $2\mid\frac{q-1}{2t}$ & \cite{fang2020new}\\
			\addlinespace
			II-2 & $q=k^{s}$, $k$ odd, $s\geq 2$ & $n=tk^{z}+1$, $1\leq t<k-1$, $2\nmid t$, $t\mid(k-1)$, $1\leq z\leq s-1$, $\eta((-1)^{\frac{k^{z}+1}{2}}t)=1$ & \cite{lebed2022some}\\
			\addlinespace
			III-1 & $q\equiv 1\Mod{4}$ & $n\mid(q-1)$ & \cite{yan2019note}\\
			\addlinespace
			III-2 & $q\equiv 1\Mod{4}$ & $n=p^{\ell}+1$, $\ell\leq e-1$ & \cite{fang2019new}\\
			\addlinespace
			III-3 & $q\equiv 1\Mod{4}$ & $n=2p^{\ell}$, $\ell\leq e-1$ & \cite{yan2019note,fang2019new}\\
			\addlinespace
			IV-1 & $q=p^{e}$, $e=2s$ & $n=(2t+1)p^{s'\ell}+1$, $s'\mid s$, $0\leq\ell<\frac{e}{s'}$, $0\leq t\leq\frac{p^{s'}-1}{2}$ & \cite{fang2019new}\\
			\addlinespace
			IV-2 & $q=p^{e}$, $e=2s$ & $n=2tp^{s'\ell}$, $s'\mid s$, $0\leq\ell<\frac{e}{s'}$, $0\leq t\leq\frac{p^{s'}-1}{2}$ & \cite{fang2019new}\\
			\addlinespace
			IV-3 & $q=p^{e}$, $e=2s$ & $n=p^{2\ell}+1$, $1\leq\ell\leq s-1$ & \cite{fang2020new}\\
			\addlinespace
			IV-4 & $q=k^{2}$, $k\equiv 3\Mod{4}$ & $n=2tk$, $t\leq\frac{k-1}{2}$ & \cite{jin2017new}\\
			\addlinespace
			IV-5 & $q=k^{2}$, $k$ odd & $n\leq k$ & \cite{jin2017new}\\
			\addlinespace
			IV-6 & $q=k^{2}$, $k$ odd & (i) $n=tm+c$ with $m\mid(q-1)$ and extra conditions on $t$ and $c$,\newline
			(ii) $n=sa+tb+c$ with $a,b\mid(q-1)$, $c=0$, $1$, $2$ and extra conditions on $s,t$ and $c$,\newline
			(iii) $n=sa+tb+std+c$ and extra conditions on $s,t,a,b,c$ and $d$, etc.&  \cite{lebed2019construction,zhang2019new,zhang2022construction,fang2020new,fang2021new,fang2022new,fang2021construction,lebed2022some,huang2023generic,wan2023newmds,wan2023new,fang2026construction}\\
			\bottomrule
		\end{tabular}
		\begin{tablenotes}
			\item For Type IV-6, restrictions on $k\Mod{4}$ are actually imposed for different constructions of $n$. The exact conditions for the parameters in Type IV-6 are not provided completely due to the limit of space. For a more detailed list of known explicit constructions, we refer to the very recent work \cite{fang2026construction}.
		\end{tablenotes}
	\end{threeparttable}
\end{table}
These constructions admit wide ranges of possible length $n$, for instance, the ratio $\mathcal{N}_{q}/(q/2)$ could be more than 85\% for some specific values of the prime power $q$ satisfying $q=k^{2}$ and $k\equiv 3\Mod{4}$ in the construction of \cite{fang2026construction}, where $\mathcal{N}_{q}$ is the number of all the MDS self-dual codes that are constructed. Most of the results were obtained by considering the evaluation set consists of multiplicative subgroups of $\mathbb{F}_{q}^*$ and their cosets, or a subspace of $\mathbb{F}_{q}$ and its cosets, with rather sophisticated techniques.\par 
However, the restrictions imposed on $q$ certainly place limitations on the possible length $n$. The constructions of Type IV are subject to that $q$ is a square, the constructions of Type II, Type I-3, Type III-2 and III-3 contribute to the case when $q$ is not a prime. If we take a prime $q\geq 7$ such that $(q-1)/2$ is also prime (there are 229567 such primes $q$ within $10^{8}$), then $q\equiv 3\Mod{4}$ Type I-1, I-2 and Type III-1 admit only possibly $n=4$, $n=(q+1)/2$ and $n=2,q-1$, respectively. Likewise, if $q\geq 13$ and $(q-1)/4$ are both prime (there are 119718 such $q$ within $10^{8}$), then $q\equiv 5\Mod{8}$ and Type I-1, I-2 and Type III-1 admit only possibly $n=4,6,(q+3)/2$, $n=2,(q+3)/4$ and $n=2,4,(q-1)/2,q-1$, respectively.\par 
Therefore, we present our results in the language of coding theory as a complement to the results that are listed in Table \ref{table_mds.self.dual.codes}, showing the existence and the quantity of all the MDS self-dual codes that could be constructed via (extended) GRS codes.\par 
For every number $n$, if $n$ is even, let $\Omega(n,q)$ be the set of all subsets $\mathcal{S}\subseteq\mathbb{F}_{q}$ with cardinality $|\mathcal{S}|=n$ that satisfy item (i) in Theorem \ref{thm_grs.self.dual.unified.criteria}. If $n$ is odd, let $\tilde{\Omega}(n+1,q)$ be the set of all subsets $\mathcal{S}\subseteq\mathbb{F}_{q}$ with cardinality $|\mathcal{S}|=n$ that satisfy item (ii) in Theorem \ref{thm_grs.self.dual.unified.criteria}. Then $|\Omega(n,q)|$ is the number of all MDS self-dual codes of length $n$ that can be constructed via $[n,n/2,n/2+1]$-GRS codes and $|\tilde{\Omega}(n+1,q)|$ is the number of all MDS self-dual codes of length $n+1$ that can be constructed via $[n+1,(n+1)/2,(n+3)/2]$-extended GRS codes.\par 
We can therefore describe the correspondence between the MDS self-dual codes and even/odd induced sub(di)graphs of $P_{q}$ and $PT_{q}$ more precisely. If $q\equiv 1\Mod{4}$, then
\begin{equation}\label{eq_grs.degree.parity.even.order.paley}
	\begin{split}
		\Omega(n,q)=&\left\{\mathcal{S}\subseteq\mathbb{F}_{q}\mid |\mathcal{S}|=n~~\text{and $P_{q}[\mathcal{S}]$ has all degrees even}\right\}\\
		&\cup\left\{\mathcal{S}\subseteq\mathbb{F}_{q}\mid |\mathcal{S}|=n~~\text{and $P_{q}[\mathcal{S}]$ has all degrees odd}\right\},
	\end{split}
\end{equation}
and if $q\equiv 3\Mod{4}$, then
\begin{equation}\label{eq_grs.degree.parity.even.order.paley.tournament}
	\begin{split}
		\Omega(n,q)=&\left\{\mathcal{S}\subseteq\mathbb{F}_{q}\mid |\mathcal{S}|=n~~\text{and $PT_{q}[\mathcal{S}]$ has all out-degrees even}\right\}\\
		&\cup\left\{\mathcal{S}\subseteq\mathbb{F}_{q}\mid |\mathcal{S}|=n~~\text{and $PT_{q}[\mathcal{S}]$ has all out-degrees odd}\right\}.
	\end{split}
\end{equation}
Likewise, if $q\equiv 1\Mod{4}$, then
\begin{equation}\label{eq_egrs.even.induced.odd.order.paley}
	\tilde{\Omega}(n+1,q)=\left\{\mathcal{S}\subseteq\mathbb{F}_{q}\mid |\mathcal{S}|=n~~\text{and $P_{q}[\mathcal{S}]$ has all degrees even}\right\},
\end{equation}
if $q\equiv 3\Mod{4}$, then we have
\begin{equation}\label{eq_egrs.odd.induced.odd.order.paley.tournament}
	\tilde{\Omega}(n+1,q)=\left\{\mathcal{S}\subseteq\mathbb{F}_{q}\mid |\mathcal{S}|=n~~\text{and $PT_{q}[\mathcal{S}]$ has all out-degrees odd}\right\}.
\end{equation}
\begin{theorem}
	If $q\equiv 1\Mod{4}$, then for all constant $0<\alpha<1$,
	\begin{equation*}
		\left|\bigcup_{\alpha q/2<n<\alpha q}\Omega(n,q)\cup\tilde{\Omega}(n+1,q)\right|\geq 2^{\left(1+o(1)\right)\left(H(\frac{\alpha}{2})-\alpha\right)q}.
	\end{equation*}
	\begin{proof}
		This follows immediately from Theorem \ref{thm_number.of.linear.size.even.induced.subgraph.lower.bound} and Equations \eqref{eq_grs.degree.parity.even.order.paley} and \eqref{eq_egrs.even.induced.odd.order.paley}.
	\end{proof}
\end{theorem}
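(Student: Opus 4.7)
The plan is to reduce the statement directly to Theorem \ref{thm_number.of.linear.size.even.induced.subgraph.lower.bound} applied to the Paley graph $P_q$ itself, using the explicit descriptions of $\Omega(n,q)$ and $\tilde{\Omega}(n+1,q)$ from \eqref{eq_grs.degree.parity.even.order.paley} and \eqref{eq_egrs.even.induced.odd.order.paley}.

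First, I would unpack what the union on the left-hand side collects. By \eqref{eq_grs.degree.parity.even.order.paley}, for every even integer $n$ in the range $(\alpha q/2,\alpha q)$, the set $\Omega(n,q)$ already contains every subset $\mathcal{S}\subseteq\mathbb{F}_q$ of cardinality $n$ such that $P_q[\mathcal{S}]$ is an even induced subgraph. Similarly, by \eqref{eq_egrs.even.induced.odd.order.paley}, for every odd integer $n$ in the same range, $\tilde{\Omega}(n+1,q)$ contains exactly those subsets of cardinality $n$ inducing an even subgraph of $P_q$. Therefore the union in the statement contains, as a subset, the family
\begin{equation*}
\mathcal{F}_\alpha:=\bigl\{\mathcal{S}\subseteq\mathbb{F}_q:\alpha q/2<|\mathcal{S}|<\alpha q\text{ and }P_q[\mathcal{S}]\text{ is even}\bigr\}.
\end{equation*}
It suffices to lower-bound $|\mathcal{F}_\alpha|$.

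Next, I would apply Theorem \ref{thm_number.of.linear.size.even.induced.subgraph.lower.bound} with $G=P_q$ and $n=q$. That theorem gives at least $2^{(1+o(1))(H(\alpha/2)-\alpha)q}$ even induced subgraphs of $P_q$ whose order lies in $[\alpha q/2,\alpha q]$; up to the negligible boundary terms $n=\alpha q/2$ and $n=\alpha q$ (which contribute at most $2\binom{q}{\alpha q}\leq 2^{(1+o(1))H(\alpha)q}$ subsets, but one can absorb this by replacing $\alpha$ with a slightly smaller constant, or observing that the strict inequalities in the indexing of the union do not affect the asymptotic exponent), this quantity is a lower bound for $|\mathcal{F}_\alpha|$. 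Combining with the inclusion above yields the claimed estimate.

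The argument is essentially a bookkeeping reduction, so there is no substantial obstacle; the only mild subtlety is the mismatch between the closed interval $[\alpha q/2,\alpha q]$ in Theorem \ref{thm_number.of.linear.size.even.induced.subgraph.lower.bound} and the open interval in the statement, which I would address by the perturbation of $\alpha$ noted above (the exponent $H(\alpha/2)-\alpha$ is continuous in $\alpha$). All other steps are immediate consequences of the definitions of $\Omega$ and $\tilde{\Omega}$.
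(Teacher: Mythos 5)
Your proposal is correct and follows exactly the route the paper intends: it unpacks $\Omega(n,q)$ and $\tilde{\Omega}(n+1,q)$ via \eqref{eq_grs.degree.parity.even.order.paley} and \eqref{eq_egrs.even.induced.odd.order.paley} to see that the union contains every even-inducing subset of the relevant size, and then invokes Theorem \ref{thm_number.of.linear.size.even.induced.subgraph.lower.bound} with $G=P_q$. The paper's own proof is just the one-line citation of these ingredients, so your write-up (including the harmless open-versus-closed interval remark) is simply a fuller version of the same argument.
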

\begin{theorem}
	Let $n$ be an even number that satisfies $2\leq n\leq q-1$. Let $\mathcal{A}$ denote the collection of the subsets $\mathcal{S}=\left\{\alpha_{1},\alpha_{2},...,\alpha_{n}\right\}\subseteq\mathbb{F}_{q}$ where each $\mathcal{S}$ satisfies the following conditions:\par 
	(i) $\eta(\Delta_{\mathcal{S}}(\alpha_{i}))=1$ for all $i=1,2,...,n$;\par 
	(ii) Let $\left\{\beta_{1},\beta_{2},...,\beta_{q-n}\right\}$ be the complement of $\mathcal{S}$. Then $\eta(\Delta_{\mathbb{F}_{q}\backslash\mathcal{S}}(\beta_{s}))=1$ for all $s=1,2,...,q-n$.\par 
	If $q\equiv 1\Mod{8}$, then every $\mathcal{S}\in\mathcal{A}$ admits an MDS self-dual code of length $n$ and $\mathbb{F}_{q}\backslash\mathcal{S}$ admits an MDS self-dual code of length $q-n+1$, and we have
	\begin{equation*}
		|\mathcal{A}|=2^{\frac{q-1}{2}}-1.
	\end{equation*}
	Otherwise, we have $\mathcal{A}=\emptyset$.
	\begin{proof}
		This follows immediately from Theorem \ref{thm_number.of.all.co.even.induced.subgraphs.paley} and Lemma \ref{lm_non.existence.co.even.induced.subgraphs.paley.directed}.
	\end{proof}
\end{theorem}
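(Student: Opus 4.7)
My plan is to reinterpret the two conditions defining $\mathcal{A}$ as parity statements about the induced sub(di)graphs on $\mathcal{S}$ and on $\mathbb{F}_q\setminus\mathcal{S}$, using the key identity
\[
\eta\bigl(\Delta_{\mathcal{S}}(\alpha_i)\bigr)=(-1)^{n-1-d_i},
\]
where $d_i$ is the (out-)degree of $\alpha_i$ in $P_q[\mathcal{S}]$ or $PT_q[\mathcal{S}]$ (this identity underlies the correspondences~\eqref{eq_grs.degree.parity.even.order.paley}--\eqref{eq_egrs.odd.induced.odd.order.paley.tournament}). The argument then splits according to $q\bmod 4$.

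For $q\equiv 3\Mod{4}$, the identity shows that condition (i) with $n$ even makes $PT_q[\mathcal{S}]$ an odd induced subdigraph (all $d_i^{-}$ odd), while condition (ii) with $q-n$ odd makes $PT_q[\mathbb{F}_q\setminus\mathcal{S}]$ an even induced subdigraph. Invoking Lemma~\ref{lm_non.existence.co.even.induced.subgraphs.paley.directed} forces $n\equiv 0\Mod{4}$ from the first and $q-n\equiv 0,1\Mod{4}$ from the second; but $q\equiv 3\Mod{4}$ makes these incompatible, so $\mathcal{A}=\emptyset$ in this case.

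For $q\equiv 1\Mod{4}$, the identity instead shows that (i) forces $P_q[\mathcal{S}]$ to have all odd degrees and (ii) forces $P_q[\mathbb{F}_q\setminus\mathcal{S}]$ to have all even degrees. The crucial step is a bijection between such ``odd-even partitions'' of $\mathbb{F}_q$ and (unordered) even-even partitions of $P_q$, obtained from the self-complementarity of the Paley graph: for any fixed non-square $c\in\mathbb{F}_q^*$, the map $x\mapsto cx$ is a graph isomorphism $P_q\to\overline{P_q}$, so $P_q[cV]$ is the edge-complement of $P_q[V]$ within $K_{|V|}$ and the degrees in $P_q[cV]$ equal $|V|-1-d_v$. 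Consequently, an all-odd set of even size corresponds to an all-even set of even size, while an all-even set of odd size is preserved. Since $q$ is odd, every even-even partition of $P_q$ has exactly one part of each parity, so $(\mathcal{S},\mathbb{F}_q\setminus\mathcal{S})\mapsto(c\mathcal{S},\mathbb{F}_q\setminus c\mathcal{S})$ yields the desired bijection, sending the unique trivial partition to itself.

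Theorem~\ref{thm_number.of.all.co.even.induced.subgraphs.paley} now furnishes the count: the number $N$ of co-even induced subgraphs of $P_q$ is $2^{(q+1)/2}$ or $2$ according to $q\equiv 1$ or $5\Mod{8}$, and since each unordered even-even partition contributes exactly two co-even subgraphs, the number of such partitions is $N/2$; subtracting the trivial one yields $|\mathcal{A}|=2^{(q-1)/2}-1$ or $0$, as claimed. The MDS self-dual code statement is then immediate from Theorem~\ref{thm_grs.self.dual.unified.criteria}: condition (i) combined with Theorem~\ref{thm_grs.self.dual.unified.criteria}(i) produces the length-$n$ MDS self-dual GRS code on $\mathcal{S}$, and condition (ii) combined with Theorem~\ref{thm_grs.self.dual.unified.criteria}(ii)---using $\eta(-1)=1$ for $q\equiv 1\Mod{4}$, which turns the required $\eta(-\Delta)=1$ into our $\eta(\Delta)=1$---produces the length-$(q-n+1)$ MDS self-dual extended GRS code on $\mathbb{F}_q\setminus\mathcal{S}$. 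The main hurdle is verifying that multiplication by $c$ truly interchanges odd-even and even-even partitions; once that correspondence is in hand, the counting reduces to a direct application of Theorem~\ref{thm_number.of.all.co.even.induced.subgraphs.paley}.
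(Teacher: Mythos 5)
Your proof is correct and follows the same route the paper intends: translating conditions (i)--(ii) into degree-parity statements via $\eta(\Delta_{\mathcal{S}}(\alpha_i))=(-1)^{n-1-d_i}$, using self-complementarity of $P_q$ (multiplication by a non-square) to pass to even-even partitions counted by Theorem \ref{thm_number.of.all.co.even.induced.subgraphs.paley}, and ruling out $q\equiv 3\Mod{4}$ via Lemma \ref{lm_non.existence.co.even.induced.subgraphs.paley.directed}. One remark: the count $2^{(q-1)/2}-1$ is only consistent with reading $\mathcal{A}$ as collecting subsets over all even sizes $2\le n\le q-1$ (not a fixed $n$), which is the interpretation your bijection implicitly adopts; with that understanding, your argument supplies exactly the details that the paper's one-line proof leaves implicit.
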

\begin{theorem}
	If $q\equiv 3\Mod{4}$, then we have $\Omega(n,q)=\emptyset$ for all $n\equiv 2\Mod{4}$ and $\tilde{\Omega}(n+1,q)=\emptyset$ for all $n\equiv 1\Mod{4}$. If $q\equiv 3\Mod{4}$, $n=\omega(1)$ and $n=o(q^{1/4})$, then we have
	\begin{equation*}
		|\Omega(n,q)|=\left(1+o(1)\right)2^{2-n}\binom{q}{n}
	\end{equation*}
	when $n\equiv 0\Mod{4}$, and
	\begin{equation*}
		|\tilde{\Omega}(n+1,q)|=\left(1+o(1)\right)2^{1-n}\binom{q}{n}
	\end{equation*}
	when $n\equiv 3\Mod{4}$.\par 
	If $q\equiv 1\Mod{4}$ and assume that $n=\omega(1)$ and $n=o(q^{1/4})$, then we have
	\begin{equation*}
		|\Omega(n,q)|=\left(1+o(1)\right)2^{2-n}\binom{q}{n}
	\end{equation*}
	when $n$ is even, and
	\begin{equation*}
		|\tilde{\Omega}(n+1,q)|=\left(1+o(1)\right)2^{1-n}\binom{q}{n}
	\end{equation*}
	when $n$ is odd.
	\begin{proof}
		This follows immediately from Theorem \ref{thm_number.of.even.induced.subgraphs.paley.undirected}, Theorem \ref{thm_number.of.even.induced.subgraphs.paley.directed} and Equations \eqref{eq_grs.degree.parity.even.order.paley}-\eqref{eq_egrs.odd.induced.odd.order.paley.tournament}.
	\end{proof}
\end{theorem}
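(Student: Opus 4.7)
The plan is to read off each case of the theorem directly from the set-theoretic identities \eqref{eq_grs.degree.parity.even.order.paley}--\eqref{eq_egrs.odd.induced.odd.order.paley.tournament}, and then invoke the counting results already established. First I observe that for every $n\geq 1$, the two families appearing in the unions on the right-hand side of \eqref{eq_grs.degree.parity.even.order.paley} and \eqref{eq_grs.degree.parity.even.order.paley.tournament} are disjoint, since no sub(di)graph of positive order can have all (out-)degrees simultaneously even and odd; so $|\Omega(n,q)|$ is the sum of the number of even and the number of odd induced sub(di)graphs of order $n$, while $|\tilde{\Omega}(n+1,q)|$ is a single such count (even-induced in the undirected case, odd-induced in the directed case).

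For $q\equiv 3\Mod{4}$, Lemma \ref{lm_non.existence.co.even.induced.subgraphs.paley.directed} says that even (resp.\ odd) induced subdigraphs of $PT_{q}$ of order $n$ exist only when $n\equiv 0,1\Mod{4}$ (resp.\ $n\equiv 0,3\Mod{4}$). Intersecting these parity constraints with the parity required in each item shows immediately that $\Omega(n,q)=\emptyset$ for $n\equiv 2\Mod{4}$ and $\tilde{\Omega}(n+1,q)=\emptyset$ for $n\equiv 1\Mod{4}$. In the remaining ranges I apply Theorem \ref{thm_number.of.even.induced.subgraphs.paley.directed}: for $n\equiv 0\Mod{4}$ with $n=\omega(1)$ and $n=o(q^{1/4})$ it yields $(1+o(1))2^{1-n}\binom{q}{n}$ for the even-induced count, while its odd-induced analogue (stated at the end of Section \ref{sect_even.induced.subgraphs.paley.small}) gives the same asymptotic for the odd-induced count. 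Summing produces the factor $2^{2-n}$ in $|\Omega(n,q)|$; for $n\equiv 3\Mod{4}$ the odd-induced count alone gives $|\tilde{\Omega}(n+1,q)|$.

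For $q\equiv 1\Mod{4}$ the argument is entirely analogous, but with Theorem \ref{thm_number.of.even.induced.subgraphs.paley.undirected} replacing Theorem \ref{thm_number.of.even.induced.subgraphs.paley.directed}, together with its odd-induced counterpart. The latter follows either from a verbatim adaptation of the proof in Section \ref{sect_even.induced.subgraphs.paley.small} (replacing the indicator $(1+(-1)^{\deg})/2$ by $(1-(-1)^{\deg})/2$) or, more quickly, from the self-complementarity of $P_{q}$ recorded at the end of Section 1. For even $n$ in the stated range I sum the two equal asymptotics to obtain $|\Omega(n,q)|=(1+o(1))2^{2-n}\binom{q}{n}$; for odd $n$ only the even-induced count contributes, giving $|\tilde{\Omega}(n+1,q)|=(1+o(1))2^{1-n}\binom{q}{n}$.

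The only step that is not purely bookkeeping is invoking the odd-induced analogues of Theorems \ref{thm_number.of.even.induced.subgraphs.paley.undirected} and \ref{thm_number.of.even.induced.subgraphs.paley.directed}, where one must check that flipping the target degree parity does not disturb the Weil-sum estimate of Lemma \ref{lemma_saddle.point.estimate}. In both the undirected and directed cases the modification only flips a sign inside the binary expansion of the indicator and introduces a global factor $(-1)^{\binom{r}{2}}$, which is harmless in the stated residue classes modulo $4$; so no new character-sum input is required and the theorem drops out immediately.
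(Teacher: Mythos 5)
Your proposal is correct and follows the same route as the paper, which simply declares the result to be an immediate consequence of Theorems \ref{thm_number.of.even.induced.subgraphs.paley.undirected} and \ref{thm_number.of.even.induced.subgraphs.paley.directed}, Lemma \ref{lm_non.existence.co.even.induced.subgraphs.paley.directed}, and the identities \eqref{eq_grs.degree.parity.even.order.paley}--\eqref{eq_egrs.odd.induced.odd.order.paley.tournament}; you have merely written out the bookkeeping (disjointness of the even/odd families, the parity exclusions, and the summation giving the factor $2^{2-n}$) that the paper leaves implicit. Your appeal to the odd-induced analogues is also legitimate, as the paper records exactly this extension at the end of Section \ref{sect_even.induced.subgraphs.paley.small} and the self-complementarity argument at the end of the introduction.
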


\section*{Acknowledgment}
This work is supported by the National Natural Science Foundation of China (No.\ 12371337) and the Natural Science Foundation of Hunan Province (No.\ 2023RC1003).

\appendix

\section{Expectations in random (di)graphs}\label{appendix.a}
For random graphs in $G_{n,p}$, viz.\,the graphs on the vertex set $[n]$ in which each pair of vertices are joined by an edge independently with probability $0<p<1$, we determine the expectation of the number of even induced subgraphs of any order $r$ of $G\in G_{n,p}$.
\begin{lemma}\label{lemma_asymptotic.of.expectation}
	If $r\to\infty$, we have
	\begin{equation*}
		\sum_{k=0}^{r}\binom{r}{k}(1-2p)^{k(r-k)}=2+o(1).
	\end{equation*}
	\begin{proof}
		If $p=1/2$ we certainly have $\sum_{k=0}^{r}\binom{r}{k}(1-2p)^{k(r-k)}=2$. Henceforth we assume $p\neq 1/2$. Let $\gamma$ denote $1-2p$. Let $0<\delta<1/2$ be a constant. Then 
		\begin{equation*}
			\sum_{k=0}^{r}\binom{r}{k}\gamma^{k(r-k)}=\sum_{k=0}^{\lfloor\delta r\rfloor}\binom{r}{k}\gamma^{k(r-k)}+\sum_{k=\lfloor\delta r\rfloor+1}^{\lceil(1-\delta)r\rceil-1}\binom{r}{k}\gamma^{k(r-k)}+\sum_{k=\lceil(1-\delta)r\rceil}^{r}\binom{r}{k}\gamma^{k(r-k)},
		\end{equation*}
		in which
		\begin{equation*}
			\left|\sum_{k=\lfloor\delta r\rfloor+1}^{\lceil(1-\delta)r\rceil-1}\binom{r}{k}\gamma^{k(r-k)}\right|\leq\sum_{k=\lfloor\delta r\rfloor+1}^{\lceil(1-\delta)r\rceil-1}\binom{r}{k}|\gamma|^{\delta^{2}r^{2}}\leq(2|\gamma|^{\delta^{2}r})^{r}=o(1).
		\end{equation*}\par 
		By Lemma \ref{lemma_binomial.coefficients.linear.size}, if $k\leq\delta r$ we have $\binom{r}{k}\leq\binom{r}{\delta r}\leq 2^{(1+\varepsilon)H(\delta)r}$ for some constant $\varepsilon>0$. We can choose a constant $M$ such that $2^{(1+\varepsilon)H(\delta)}|\gamma|^{M}<1$. Thus
		\begin{equation*}
			\begin{split}
				\left|\sum_{k=1}^{\lfloor\delta r\rfloor}\binom{r}{k}\gamma^{k(r-k)}\right|&\leq\sum_{k=1}^{\lfloor M\rfloor}\binom{r}{k}|\gamma|^{k(r-k)}+\sum_{k=\lfloor M\rfloor+1}^{\lfloor\delta r\rfloor}\binom{r}{k}|\gamma|^{k(r-k)}\\
				&\leq\sum_{k=1}^{\lfloor M\rfloor}\left(1+o(1)\right)\frac{(r|\gamma|^{r})^{k}}{k!|\gamma|^{k^{2}}}+\sum_{k=\lfloor M\rfloor+1}^{\lfloor\delta r\rfloor}2^{\left(1+\varepsilon\right)H(\delta)r}|\gamma|^{M(r-M)}\\
				&\leq o(1)+\frac{\delta r(2^{(1+\varepsilon)H(\delta)}|\gamma|^{M})^{r}}{|\gamma|^{M^{2}}}\\
				&=o(1).
			\end{split}
		\end{equation*}
		By symmetry, we have $\left|\sum_{k=\lceil(1-\delta)r\rceil}^{r-1}\binom{r}{k}\gamma^{k(r-k)}\right|=o(1)$ as well, for which the conclusion holds.
	\end{proof}
\end{lemma}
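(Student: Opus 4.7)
The plan is to set $\gamma := 1-2p$ and first dispose of the trivial case $p = 1/2$, where $\gamma = 0$ forces every term with $1 \le k \le r-1$ to vanish, leaving only the endpoint contributions $\gamma^0 = 1$ from $k=0$ and $k=r$, which sum to exactly $2$. Henceforth assume $|\gamma| \in (0,1)$. The same endpoint contributions recover the advertised constant $2$, so it suffices to prove that the interior sum $\sum_{k=1}^{r-1}\binom{r}{k}\gamma^{k(r-k)}$ is $o(1)$. Throughout I will exploit the symmetry of both $\binom{r}{k}$ and $k(r-k)$ under $k \leftrightarrow r-k$, so that any bound obtained on $[1, \lfloor r/2 \rfloor]$ transfers automatically to $[\lceil r/2 \rceil, r-1]$.

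Next, fix a small constant $\delta \in (0, 1/2)$ and split the interior sum into a \emph{middle} range $k \in [\lfloor \delta r\rfloor+1, \lceil(1-\delta)r\rceil-1]$ and two symmetric \emph{tail} ranges. The middle range is handled by the crude bounds $\binom{r}{k}\le 2^r$ and $k(r-k)\ge\delta^2 r^2$, which combine into $2^r |\gamma|^{\delta^2 r^2} = (2|\gamma|^{\delta^2 r})^r$; since $|\gamma|<1$, this decays super-exponentially and is $o(1)$.

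For the small-$k$ tail, I would further sub-split into a bounded portion $[1, M]$ and a linear portion $[M+1, \lfloor \delta r \rfloor]$, where $M$ is a constant to be chosen. For bounded $k \le M$, Lemma \ref{lemma_binomial.coefficient.constant} gives $\binom{r}{k} = (1+o(1))\, r^k/k!$, and writing $|\gamma|^{k(r-k)} = |\gamma|^{kr}/|\gamma|^{k^2}$ each term behaves like $(r|\gamma|^r)^k / (k!\,|\gamma|^{k^2}) = o(1)$, since $r|\gamma|^r \to 0$ exponentially. For the linear portion, Lemma \ref{lemma_binomial.coefficients.linear.size} gives $\binom{r}{k} \le 2^{(1+\varepsilon) H(\delta) r}$ for any fixed $\varepsilon > 0$ and all $k \in [M+1, \lfloor \delta r\rfloor]$, while $|\gamma|^{k(r-k)} \le |\gamma|^{M(r-M)}$ uniformly on this range (since $k(r-k)$ is monotone increasing on $[0, r/2]$). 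The key calibration is to first choose $\delta$ small enough that $H(\delta)$ is very close to $0$, and then choose $M$ large enough that $2^{(1+\varepsilon) H(\delta)} |\gamma|^M < 1$; this is always feasible because $|\gamma| < 1$. The resulting estimate is of order $\delta r \cdot \bigl(2^{(1+\varepsilon) H(\delta)} |\gamma|^M\bigr)^r \cdot |\gamma|^{-M^2}$, which is $o(1)$. The large-$k$ tail is identical by the $k \leftrightarrow r-k$ symmetry, and combining all three ranges finishes the bound.

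The main obstacle is precisely the calibration in the linear portion: one must ensure that the growth $2^{(1+\varepsilon) H(\delta) r}$ of the binomial coefficients is overwhelmed by the decay $|\gamma|^{Mr}$ forced by the threshold $k \ge M$. The fact that $H(\delta) \to 0$ as $\delta \to 0^+$ is precisely what makes a finite $M$, independent of $r$, suffice; without this scaling, no constant threshold could dominate the middle growth, and one would have to argue more delicately on a $k$-by-$k$ basis.
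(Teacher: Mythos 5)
Your proposal is correct and follows essentially the same route as the paper's proof: the same three-way split at $\lfloor\delta r\rfloor$ and $\lceil(1-\delta)r\rceil$, the same crude bound $2^r|\gamma|^{\delta^2 r^2}$ for the middle range, the same sub-split of the small-$k$ tail at a constant threshold $M$ chosen so that $2^{(1+\varepsilon)H(\delta)}|\gamma|^M<1$, and the same appeal to symmetry for the large-$k$ tail. One minor remark: the calibration does not actually require taking $\delta$ small so that $H(\delta)$ is near $0$ — for any fixed $\delta\in(0,1/2)$ a sufficiently large constant $M$ works since $|\gamma|<1$ — but this does not affect the validity of your argument.
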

Let $X_{r}$ be number of even induced subgraphs of order $r$ in $G\in G_{n,p}$. The asymptotic formula $\mathbb{E}(X_{r})=\left(1+o(1)\right)\binom{n}{r}2^{-r+1}$ in the following result has been obtained in \cite[Lemma 2.4]{ferber2023subgraphs} with more general congruence conditions, by considering principal submatrices of the random matrix. We derive the exact formula for $\mathbb{E}(X_{r})$ using the indicator variable.
\begin{proposition}\label{prop_expectation.of.number.of.even.induced.subgraphs.undirected}
	We have
	\begin{equation*}
		\mathbb{E}(X_{r})=\frac{1}{2^{r}}\binom{n}{r}\sum_{k=0}^{r}\binom{r}{k}(1-2p)^{k(r-k)}=\left(1+o(1)\right)\binom{n}{r}2^{-r+1}.
	\end{equation*}
	In particular, if $p=1/2$, then $\mathbb{E}(X_{r})=2^{1-r}\binom{n}{r}$.
	\begin{proof}
		For each non-empty set $S\subseteq[n]$, define the indicator variable
		\begin{equation*}
			\mathds{1}_{S}=\begin{dcases}
				1 & \text{if $G[S]$ is even},\\
				0 & \text{otherwise}.
			\end{dcases}
		\end{equation*}
		Then $X_{r}=\sum_{S\subseteq[n]\atop|S|=r}\mathds{1}_{S}$, which yields
		\begin{equation}\label{eq_expectation.of.number.of.even.induced.subgraphs.indicator}
			\mathbb{E}(X_{r})=\sum_{S\subseteq[n]\atop|S|=r}\mathbb{P}(\mathds{1}_{S}=1).
		\end{equation}
		Observe that $\mathds{1}_{S}=\prod_{v\in S}\left(1+(-1)^{\deg_{G[S]}(v)}\right)/2$. This implies
		\begin{equation*}
			\begin{split}
				\mathbb{P}(\mathds{1}_{S}=1)&=\mathbb{E}\left(\prod_{v\in S}\frac{1+(-1)^{\deg_{G[S]}(v)}}{2}\right)=\frac{1}{2^{r}}\mathbb{E}\left(\sum_{T\subseteq S}\prod_{v\in T}(-1)^{\deg_{G[S]}(v)}\right)\\
				&=\frac{1}{2^{r}}\sum_{T\subseteq S}\mathbb{E}\left((-1)^{2e(T)+e(T,S\backslash T)}\right)=\frac{1}{2^{r}}\sum_{T\subseteq S}\mathbb{E}\left((-1)^{e(T,S\backslash T)}\right).
			\end{split}
		\end{equation*}
		Since the edges are chosen independently and the contribution of each possible edge in the product $\prod_{e\in e(T,S\backslash T)}(-1)$ equals $1-2p$, this probability equals
		\begin{equation}\label{eq_undirected.probability.of.G[S].even}
			\mathbb{P}(\mathds{1}_{S}=1)=\frac{1}{2^{r}}\sum_{T\subseteq S}(1-2p)^{|T|(|S|-|T|)}=\frac{1}{2^{r}}\sum_{k=0}^{r}\binom{r}{k}(1-2p)^{k(r-k)}.
		\end{equation}
		The Equations \eqref{eq_expectation.of.number.of.even.induced.subgraphs.indicator} and \eqref{eq_undirected.probability.of.G[S].even} then complete the proof, with the asymptotic approximation following from Lemma \ref{lemma_asymptotic.of.expectation}.
	\end{proof}
\end{proposition}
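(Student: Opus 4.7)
The plan is to compute $\mathbb{E}(X_r)$ by summing over all $r$-subsets $S\subseteq[n]$ the probability that $G[S]$ is even, using an indicator variable built from parities of the vertex degrees. Concretely, I would define for each $S$ with $|S|=r$ the indicator $\mathds{1}_S$ that is $1$ precisely when every $v\in S$ has even $\deg_{G[S]}(v)$, and write it as
\begin{equation*}
\mathds{1}_S=\prod_{v\in S}\frac{1+(-1)^{\deg_{G[S]}(v)}}{2},
\end{equation*}
so that $X_r=\sum_{|S|=r}\mathds{1}_S$ and hence $\mathbb{E}(X_r)=\binom{n}{r}\,\mathbb{P}(\mathds{1}_S=1)$ for any fixed $S$ (by symmetry of $G_{n,p}$).

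Next I would expand the product over $v\in S$ as a sum $2^{-r}\sum_{T\subseteq S}\prod_{v\in T}(-1)^{\deg_{G[S]}(v)}$. Since $\sum_{v\in T}\deg_{G[S]}(v)=2e(T)+e(T,S\setminus T)$, the parity reduces to $(-1)^{e(T,S\setminus T)}$. Therefore
\begin{equation*}
\mathbb{P}(\mathds{1}_S=1)=\frac{1}{2^r}\sum_{T\subseteq S}\mathbb{E}\left((-1)^{e(T,S\setminus T)}\right).
\end{equation*}
The key observation is that the edges between $T$ and $S\setminus T$ are mutually independent in $G_{n,p}$, each contributing a factor $\mathbb{E}((-1)^{X})=1-2p$ where $X$ is a Bernoulli$(p)$ variable. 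Grouping by $|T|=k$, this yields
\begin{equation*}
\mathbb{P}(\mathds{1}_S=1)=\frac{1}{2^r}\sum_{k=0}^{r}\binom{r}{k}(1-2p)^{k(r-k)},
\end{equation*}
giving the exact formula. The case $p=1/2$ immediately gives $\mathbb{P}(\mathds{1}_S=1)=2^{-r}\cdot 2=2^{1-r}$, hence $\mathbb{E}(X_r)=2^{1-r}\binom{n}{r}$.

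For the asymptotic statement $\mathbb{E}(X_r)=(1+o(1))\binom{n}{r}2^{-r+1}$, I would directly invoke Lemma \ref{lemma_asymptotic.of.expectation}, which gives $\sum_{k=0}^{r}\binom{r}{k}(1-2p)^{k(r-k)}=2+o(1)$. Multiplying by $2^{-r}\binom{n}{r}$ produces the desired approximation. There is no substantive obstacle here: the two ingredients (the telescoping expansion of the parity indicator and the edge-independence of $G_{n,p}$) are standard, and the asymptotic reduction is entirely handed to us by the preceding lemma. The only care required is the bookkeeping in passing from $(-1)^{\deg_{G[S]}(v)}$ to $(-1)^{e(T,S\setminus T)}$, which just uses that the $2e(T)$ contribution vanishes mod $2$.
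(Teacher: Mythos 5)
Your proposal is correct and follows essentially the same route as the paper: the same parity-indicator expansion $\mathds{1}_S=\prod_{v\in S}\bigl(1+(-1)^{\deg_{G[S]}(v)}\bigr)/2$, the same reduction of $\sum_{v\in T}\deg_{G[S]}(v)$ to $e(T,S\setminus T)$ modulo $2$, the same use of edge-independence to get the factor $(1-2p)^{k(r-k)}$, and the same appeal to Lemma \ref{lemma_asymptotic.of.expectation} for the asymptotics. The only cosmetic difference is that you invoke symmetry to fix one $S$ rather than summing $\mathbb{P}(\mathds{1}_S=1)$ over all $S$, which changes nothing.
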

Random digraphs, denoted by $D_{n,p}$, are defined similarly as the digraphs on the vertex set $[n]$ where each pair of vertices $x,y$ are assigned $x\to y$ independently with probability $p$.
\begin{proposition}\label{prop_expectation.of.number.of.even.induced.subgraphs.directed}
	Let $\tilde{X}_{r}$ be the number of even induced subdigraphs of order $r$ in $D\in D_{n,p}$. Then
	\begin{equation*}
		\mathbb{E}(\tilde{X}_{r})=\frac{1}{2^{r}}\binom{n}{r}\left(1+(1-2p)^{r-1}\right)^{r}=\left(1+o(1)\right)\binom{n}{r}2^{-r}.
	\end{equation*}
	In particular, if $p=1/2$, then $\mathbb{E}(\tilde{X}_{r})=2^{-r}\binom{n}{r}$.
	\begin{proof}
		Analogously, we define
		\begin{equation*}
			\tilde{\mathds{1}}_{S}=\begin{dcases}
				1 & \text{if $D[S]$ is even},\\
				0 & \text{otherwise},
			\end{dcases}
		\end{equation*}
		for each non-empty set $S\subseteq[n]$ with $|S|=r$. Note that the out-degrees of vertices in $D[S]$ are mutually independent, which implies
		\begin{equation*}
			\mathbb{P}(\tilde{\mathds{1}}_{S}=1)=\frac{1}{2^{r}}\sum_{T\subseteq S}\mathbb{E}\left(\prod_{v\in T}(-1)^{\deg_{D[S]}^{-}(v)}\right)=\frac{1}{2^{r}}\sum_{T\subseteq S}\prod_{v\in T}\mathbb{E}\left((-1)^{\deg_{D[S]}^{-}(v)}\right).
		\end{equation*}
		Clearly, the expected value of $(-1)^{\deg_{D[S]}^{-}(v)}$ equals $(1-2p)^{|S|-1}$, thus
		\begin{equation*}
			\mathbb{P}(\tilde{\mathds{1}}_{S}=1)=\frac{1}{2^{r}}\sum_{T\subseteq S}(1-2p)^{|T|(|S|-1)}=\frac{1}{2^{r}}\sum_{k=0}^{r}\binom{r}{k}(1-2p)^{k(r-1)}=\frac{1}{2^{r}}\left(1+(1-2p)^{r-1}\right)^{r}.
		\end{equation*}
		This completes the proof since $\mathbb{E}(\tilde{X}_{r})=\sum_{S\subseteq[n]\atop|S|=r}\mathbb{P}(\tilde{\mathds{1}}_{S})$.
	\end{proof}
\end{proposition}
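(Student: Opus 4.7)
The plan is to mirror the argument used for undirected random graphs in Proposition \ref{prop_expectation.of.number.of.even.induced.subgraphs.undirected}, while exploiting a crucial simplification available in the directed setting: in $D_{n,p}$ the random variables $\deg^{-}_{D[S]}(v)$ for different $v\in S$ depend on disjoint collections of arcs (namely, those with tail $v$), so they are mutually independent. This is the feature that yields a clean closed form rather than an alternating double sum.

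First I would reduce to a single probability by linearity of expectation and symmetry: if $\tilde{\mathds{1}}_S$ is the indicator that $D[S]$ has all out-degrees even, then $\mathbb{E}(\tilde{X}_r)=\binom{n}{r}\,\mathbb{P}(\tilde{\mathds{1}}_S=1)$ for any fixed $S$ of size $r$. Next I would use the standard parity trick
\begin{equation*}
\tilde{\mathds{1}}_{S}=\prod_{v\in S}\frac{1+(-1)^{\deg^{-}_{D[S]}(v)}}{2}=\frac{1}{2^{r}}\sum_{T\subseteq S}\prod_{v\in T}(-1)^{\deg^{-}_{D[S]}(v)},
\end{equation*}
take expectations, and then invoke independence of the out-degrees to pull the expectation inside the product: $\mathbb{E}\bigl(\prod_{v\in T}(-1)^{\deg^{-}_{D[S]}(v)}\bigr)=\prod_{v\in T}\mathbb{E}\bigl((-1)^{\deg^{-}_{D[S]}(v)}\bigr)$.

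The per-vertex factor is immediate from a second independence (the individual arcs $v\to u$ being independent Bernoulli$(p)$): writing $\deg^{-}_{D[S]}(v)=\sum_{u\in S\setminus\{v\}}X_{v,u}$, one gets $\mathbb{E}((-1)^{\deg^{-}_{D[S]}(v)})=\bigl(\mathbb{E}((-1)^{X_{v,u}})\bigr)^{r-1}=(1-2p)^{r-1}$. Summing over $T\subseteq S$ according to $|T|=k$ then collapses the remaining sum by the binomial theorem:
\begin{equation*}
\mathbb{P}(\tilde{\mathds{1}}_{S}=1)=\frac{1}{2^{r}}\sum_{k=0}^{r}\binom{r}{k}(1-2p)^{k(r-1)}=\frac{1}{2^{r}}\bigl(1+(1-2p)^{r-1}\bigr)^{r},
\end{equation*}
giving the claimed exact formula after multiplying by $\binom{n}{r}$.

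Finally, for the asymptotic statement I would observe that $|1-2p|<1$ (or $=0$ when $p=1/2$), so $(1-2p)^{r-1}\to 0$, and hence $(1+(1-2p)^{r-1})^{r}=\exp\!\bigl(r(1-2p)^{r-1}+O(r(1-2p)^{2(r-1)})\bigr)=1+o(1)$, since $r|1-2p|^{r-1}\to 0$ for any fixed $p\in(0,1)$. The case $p=1/2$ gives the exact value $2^{-r}\binom{n}{r}$ without any error term. There is no real obstacle here; the whole argument is essentially a double application of independence, and in contrast to the undirected case (Lemma \ref{lemma_asymptotic.of.expectation}) no delicate split of the sum is needed, because the exponent is linear in $|T|$ rather than the bilinear $|T|(|S|-|T|)$.
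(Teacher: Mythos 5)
Your proposal is correct and follows essentially the same route as the paper: the same parity-indicator expansion, the same use of mutual independence of the out-degrees to factor the expectation, the same per-vertex computation giving $(1-2p)^{r-1}$, and the same binomial-theorem collapse. The only addition is your explicit justification of the $1+o(1)$ factor via $r|1-2p|^{r-1}\to 0$, which the paper leaves implicit.
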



\bibliographystyle{amsalpha}
\bibliography{references}

\end{document}